\renewcommand{\L}{\mathbb L}
\newcommand{\ot}{\otimes}
\newcommand{\Z}{\mathbb{Z}} 
\newcommand{\Q}{\mathbb{Q}}
\newcommand{\Aaut}{\mathscr Aut}
\DeclareMathOperator{\id}{id} 
 \DeclareMathOperator{\Aut}{Aut}
 \DeclareMathOperator{\End}{End}
\DeclareMathOperator{\map}{map}
\DeclareMathOperator{\coker}{coker}
\DeclareMathOperator{\aut}{aut}
\DeclareMathOperator{\rel}{rel}
\DeclareMathOperator{\Hom}{Hom}
\DeclareMathOperator{\Der}{Der}
\DeclareMathOperator{\GL}{GL}
\DeclareMathOperator{\incl}{incl}
\newcommand{\la}{\langle}
\newcommand{\ra}{\rangle}
\DeclareMathOperator{\Tor}{Tors}
\DeclareMathOperator{\im}{im}
\newcommand{\C}{\mathbb{C}}
\newcommand{\E}{\mathcal E}
\newcommand{\g}{\mathfrak g}
\newcommand{\h}{\mathfrak h}
\newcommand{\M}{\mathcal M}
\newtheorem{thm}{Theorem}[section]
\newtheorem{cor}[thm]{Corollary}
\newtheorem{prop}[thm]{Proposition}
\newtheorem{lemma}[thm]{Lemma} 
\theoremstyle{definition}
\newtheorem{dfn}[thm]{Definition}
\newtheorem{rmk}[thm]{Remark}
\newtheorem{conv}[thm]{Convention}
\newcommand{\Addresses}{{
  \bigskip
  \footnotesize
  
\textsc{Department of Mathematics, Stockholm University, SE-106 91 Stockholm, Sweden}\par\nopagebreak
  \textit{E-mail address:} \texttt{hadrien.espic@gmail.com, basharsaleh1@gmail.com}
}}
\begin{document}
\title{On the  group of homotopy classes of relative homotopy automorphisms} 
\author{Hadrien Espic and Bashar Saleh} 
\date{}
\maketitle
\begin{abstract}
We prove that the  group of homotopy classes of relative homotopy automorphisms of a simply connected finite CW-complex is finitely presented and that the rationalization map from this group to its rational analogue has a finite kernel.
\end{abstract}

\section{Introduction}
Given a cofibration $A\subset X$ of simply connected spaces of the homotopy type of finite CW-complexes, we prove that the group of homotopy classes of relative homotopy automorphisms, denoted by $\pi_0(\aut_A(X))$, is finitely presented. Moreover, we prove that the group homomorphism $\pi_0(\aut_A(X))\to \pi_0(\aut_{A_\Q}(X_\Q))$ induced by rationalization has a finite kernel.

Recall that the space of relative homotopy automorphisms $\aut_A(X)$ is the space of all homotopy automorphisms of $X$ that preserve $A$ pointwise. The homotopy theory of relative homotopy automorphisms and algebraic models for them is a topic of activity (\cite{BM14}, \cite{grey}, \cite{BS19}), especially in the study of homological stability for homotopy automorphisms of connected sums of manifolds.

The result of this paper is, to some extent, an extension of some of the consequences of the classical Sullivan-Wilkerson Theorem \cite[Theorem 10.3 (i)]{sullivan77}, \cite[Theorem B (2)]{wilkerson76} (not affected by the erratum \cite{wilkersonerrata}) stating that $\pi_0(\aut(X_\Q))$ is the group of $\Q$-points of an algebraic group $\mathscr G$ and that $\pi_0(\aut(X))$ is commensurable up to finite kernel with an arithmetic subgroup of $\mathscr G$. Two  important consequences of the Sullivan-Wilkerson theorem are that $\pi_0(\aut(X))$ is finitely presented (since arithmetic groups are finitely presented), and that the finiteness of $\pi_0(\aut(X_\Q))$ implies the finiteness of $\pi_0(\aut(X))$. In this paper we show that these two properties also hold in the relative case.

\begin{thm}\label{thm:AA}
Let  $A\subset X$ be a cofibration of simply connected spaces of the homotopy type of finite CW-complexes. Then $\pi_0(\aut_A(X))$ is finitely presented and the map $\pi_0(\aut_A(X))\to \pi_0(\aut_{A_\Q}(X_\Q))$ has a finite kernel.
\end{thm}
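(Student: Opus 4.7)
I plan to follow the template of the Sullivan--Wilkerson theorem, adapted to the relative setting. To begin, I would model the pair $(X, A)$ by a minimal relative Quillen Lie model $(\g, d) \hookrightarrow (\g \sqcup \L(V), d)$ (equivalently a relative Sullivan model), where $\g$ is a Quillen model of $A$ and $V$ is finite-dimensional in each degree and concentrated in a finite range by the finite CW hypothesis. Using algebraic models for relative homotopy automorphisms as developed in the cited references, I would identify $\pi_0(\aut_{A_\Q}(X_\Q))$ with the group of $\Q$-points of a linear algebraic group $G$ over $\Q$: concretely, $G$ is obtained from the group of relative dg-Lie automorphisms of the model by quotienting out the normal unipotent subgroup of automorphisms that are homotopic to the identity relative to $\g$.

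Next, I would use the integral cellular structure of $(X, A)$ to produce a $\Z$-form of the model, giving an arithmetic subgroup $\Gamma \subseteq G(\Q)$. The main claim is then that the image of $\pi_0(\aut_A(X))$ under rationalization is commensurable with $\Gamma$. As in Sullivan--Wilkerson, one inclusion follows because every element of $\pi_0(\aut_A(X))$ preserves the integral lattice up to finite index, and the other by a realization argument: any rational relative automorphism preserving the $\Z$-lattice can be realized, up to finite index, as a genuine relative homotopy automorphism of $X$ via obstruction theory over the cells of $X$ attached to $A$.

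Since arithmetic subgroups of linear algebraic $\Q$-groups are finitely presented, commensurability then yields that $\pi_0(\aut_A(X))$ is finitely presented. For the kernel, the rationalization factors as $\pi_0(\aut_A(X)) \to \Gamma \hookrightarrow G(\Q)$ with the second arrow injective, so the kernel of the rationalization coincides with the kernel of $\pi_0(\aut_A(X)) \to \Gamma$, which is finite by commensurability. The hardest step will be the realization claim inside the commensurability argument: cellular liftings must be chosen to fix $A$ pointwise, imposing constraints on obstruction classes in the relative cohomology $H^*(X, A; \pi_*(X))$. I would handle this by inductively extending over cells attached to $A$ and using the finite generation of the homotopy groups of $X$ to bound the torsion arising at each stage.
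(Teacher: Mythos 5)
The pivotal step in your outline --- the ``realization claim'' that any rational relative automorphism preserving an integral lattice can be realized, up to finite index, as a genuine relative homotopy automorphism, so that $r(\pi_0(\aut_A(X)))$ is commensurable with an arithmetic subgroup $\Gamma$ of $\pi_0(\aut_{A_\Q}(X_\Q))$ --- is precisely the point where the relative case is not a routine adaptation of Sullivan--Wilkerson, and the paper does not prove it. The classical realization argument runs up a Postnikov tower, which is functorial; the Eckmann--Hilton dual construction one would need here (homology decompositions, as in Scheerer's announced dualization) is \emph{not} functorial, so ``inductively extending over cells attached to $A$'' with obstructions in $H^*(X,A;\pi_*(X))$ does not obviously produce relative homotopy automorphisms, and no bound on the index of the realized subgroup comes out of it. Indeed, the paper treats relative arithmeticity only conditionally (Section \ref{sec:arithm}): \emph{if} $r(K_A(X))$ is arithmetic in $K_{A_\Q}(X_\Q)$, then $\E_A(X)$ is commensurable with an arithmetic subgroup of $\E_{A_\Q}(X_\Q)$. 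As written, your proof of both finite presentability and finiteness of the kernel rests on this unestablished commensurability, so there is a genuine gap.

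The paper circumvents this in two ways you may want to compare with. For the finite kernel it gives a direct obstruction-theoretic argument (Theorem \ref{Thm:finiteKer}): if $f_\Q\simeq_{\rel A_\Q}\id_{X_\Q}$ but $f\not\simeq_{\rel A}\id_X$, the first nonvanishing difference obstruction in $H^{k+1}(X,A;\pi_{k+1}(X))$ must be a torsion class by naturality under rationalization; since $X$ is finite-dimensional and these torsion groups are finite, an iterated counting argument (using additivity of obstructions and Proposition \ref{prop:Obstr}(d)) shows the kernel is finite --- no algebraic group theory is needed. For finite presentation, instead of proving relative arithmeticity it uses the evaluation fibration $\aut_A(X)\to\aut(X)\to\map(A,X)$: the image $j(\E_A(X))\subseteq\E(X)$ is commensurable with an arithmetic subgroup of the algebraic subgroup $j_\Q(\E_{A_\Q}(X_\Q))\subseteq\E(X_\Q)$ by the \emph{classical} (non-relative) Sullivan--Wilkerson theorem together with Theorem \ref{thm:AlgMap} and Lemma \ref{lemma:ArithGrpsUnderHomomorph}, and the kernel of $j$ is a quotient of the finitely generated nilpotent group $\pi_1(\map(A,X),\incl)$; finite presentability then follows from stability under extensions. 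If you want to keep your strategy, you would have to supply a genuine proof of the relative realization/arithmeticity statement, which is exactly what the paper identifies as open.
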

This is proved in Theorem \ref{Thm:finiteKer} and Theorem \ref{thm:finitePresentation}. Based on a preprint version of this paper, Kupers \cite[Proposition 3.4]{kupers22} proves a full generalization of the relative Sullivan-Wilkerson theorem.

Our proof of the finite presentation property, depends on the classical Sullivan-Wilkerson Theorem, which involves some theory of algebraic and arithmetic groups. In order to be able to use the classical Sullivan-Wilkerson Theorem, we prove the following:

\begin{thm}\label{thm:B}
Let $A\subset X$ be a cofibration of simply connected spaces of  the homotopy type of finite CW-complexes. Then $\pi_0(\aut_{A_\Q}(X_\Q))$ is the group of $\Q$-points of a linear algebraic group and the map $\pi_0(\aut_{A_\Q}(X_\Q))\to \pi_0(\aut(X_\Q))$ induced by the inclusion $\aut_{A_\Q}(X_\Q)\hookrightarrow \aut(X_\Q)$ is given by the $\Q$-points of a homomorphism of linear algebraic groups.
\end{thm}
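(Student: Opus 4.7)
The plan is to reduce the theorem to the setting used in the classical Sullivan--Wilkerson argument by passing to a relative minimal model and then truncating. I would fix a relative Sullivan minimal model
\[
(\Lambda V_A, d) \hookrightarrow (\Lambda V_A \otimes \Lambda V, d)
\]
of the cofibration $A\subset X$, where $V_A$ and $V$ are graded $\Q$-vector spaces of finite type (using that $A$ and $X$ are simply connected and of finite CW-type). By standard rational homotopy theory, $\pi_0(\aut_{A_\Q}(X_\Q))$ is identified with the group of CDGA automorphisms of $\Lambda V_A\otimes \Lambda V$ that restrict to the identity on the sub-CDGA $\Lambda V_A$, modulo CDGA homotopies that are constant on $\Lambda V_A$.

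Next I would run the Sullivan--Wilkerson truncation in the relative setting. Since $X$ has the homotopy type of a finite CW-complex, there is an integer $N$ (depending on $\dim X$) such that every relative CDGA automorphism is determined, up to relative homotopy, by its restriction to $V^{\leq N}$. The set of such restrictions is a subset of the finite-dimensional $\Q$-vector space $\prod_{n\leq N}\Hom_\Q(V^n,(\Lambda V_A\otimes \Lambda V)^n)$ cut out by polynomial equations expressing compatibility with $d$, multiplicativity, and the constraint of being the identity on $\Lambda V_A$. Invertibility is a principal open condition given by the non-vanishing of a determinant on the quotient modulo decomposables. This equips the group of relative automorphism germs with the structure of a linear algebraic group $G$ over $\Q$, which fits into an algebraic inclusion $G\hookrightarrow \widetilde G$ into the analogous group of absolute automorphisms of the truncation.

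Finally I would identify the relative homotopy equivalence relation on $G$ as the orbit relation of a unipotent normal subgroup $U\leq G$. Concretely, following the classical argument, two relative automorphisms agreeing on $V^{\leq N}$ are relatively homotopic precisely when they differ by a product of exponentials of nilpotent derivations of the form $[d,h]$, where $h$ is a relative derivation of negative degree vanishing on $\Lambda V_A$; these form a unipotent algebraic subgroup $U$ of $G$, and the analogous subgroup $\widetilde U\leq\widetilde G$ contains the image of $U$. The quotient $G/U$ is then a linear algebraic group isomorphic to $\pi_0(\aut_{A_\Q}(X_\Q))$, and the forgetful morphism $G\to\widetilde G$ descends to a homomorphism $G/U\to \widetilde G/\widetilde U\cong \pi_0(\aut(X_\Q))$ of linear algebraic groups, which is the map in question.

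The main obstacle I foresee is making the truncation step rigorous in the relative case: one must check that a single degree $N$ can be chosen so that both the classification of relative automorphisms and the relative homotopy relation are faithfully captured by data in degrees $\leq N$, and that the description of the homotopy relation via the unipotent group of exponentials of $[d,h]$ really works relatively. Both amount to standard obstruction-theoretic extension arguments for relative Sullivan algebras, but they need to be carried out with care so as to produce genuinely algebraic structures (closed subvarieties and unipotent subgroups) rather than merely set-theoretic ones.
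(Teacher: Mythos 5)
Your argument breaks down at the very first step, and the failure is structural rather than technical. A relative Sullivan algebra $(\Lambda V_A,d)\hookrightarrow(\Lambda V_A\otimes\Lambda V,d)$ whose base is a model of $A$ does not model the cofibration $A\subset X$: Sullivan models are contravariant, so the map of CDGAs induced by the inclusion goes $A_{PL}(X)\to A_{PL}(A)$, from the model of $X$ to the model of $A$, and there is in general no CDGA map at all from a model of $A$ to a model of $X$. What your relative Sullivan algebra does model is a \emph{fibration} over $A_{\Q}$, and CDGA automorphisms of $\Lambda V_A\otimes\Lambda V$ fixing $\Lambda V_A$, modulo homotopies constant on $\Lambda V_A$, compute fiberwise (over-$A_\Q$) self-equivalences --- the Eckmann--Hilton dual situation treated by Scheerer --- not $\pi_0(\aut_{A_\Q}(X_\Q))$. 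If one instead tries to work with the correct CDGA map (a surjection $\Lambda V_X\to M_A$), the relevant comparison map of mapping spaces is postcomposition, which is not a Kan fibration when the structure map is a cofibration, so strict automorphisms over the base need not compute the homotopy fiber of $\aut(X_\Q)\to\map(A_\Q,X_\Q)$; this is exactly the reason the paper abandons Sullivan models and develops minimal \emph{relative dg Lie algebra} models, where the modeling is covariant, $A\subset X$ becomes a cofibration $\L(V)\to\L(V\oplus W)$, the restriction map $\End(\L_X)_\bullet\to\mathrm{Map}(\L_A,\L_X)_\bullet$ is a Kan fibration (Proposition \ref{prop:groupofcomponents}), and Theorem \ref{thm:hautOfMinimal} guarantees that relative quasi-isomorphisms of the minimal relative model are genuine automorphisms, so that Corollary \ref{cor:pi-noll} identifies $\pi_0(\aut_{A_\Q}(X_\Q))$ with strict $\iota$-relative automorphisms modulo relative homotopy.

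Granting a correct identification, the remainder of your outline does parallel the paper's Section \ref{sec:AlgGrps}: algebraicity of the strict relative automorphism group, a normal unipotent subgroup obtained by exponentiating boundaries $[d,h]$ of relative derivations, the quotient giving the algebraic group structure, and a comparison morphism to the absolute case. But two further points you defer would still need real work in your setting. First, minimal Sullivan models of finite complexes need not be finitely generated, so your truncation at a degree $N$ and the claim that the relative homotopy relation is captured in degrees $\le N$ is a genuine (Wilkerson-style) argument to be supplied; in the Lie approach this disappears because the minimal relative model is finitely generated (Lemma \ref{lemma:finiteDim}). Second, your $\widetilde G/\widetilde U$ is built from a model that is not minimal as an absolute model of $X$, so identifying it with $\pi_0(\aut(X_\Q))$ and making the induced map algebraic requires an explicit comparison with a minimal absolute model, which is what the paper does via a homotopy inverse in Theorem \ref{thm:AlgMap}.
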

This is proved in Theorem \ref{thm:alggrp} and Theorem \ref{thm:AlgMap}. The algebraicity of the non-relative homotopy automorphisms of a rational space $X_\Q$ is usually proved by showing that the group of homotopy classes of  automorphisms of a minimal Sullivan model for $X_\Q$ is the group of $\Q$-points of an algebraic group. However, this approach is not suitable for modelling relative homotopy automorphisms. Rather, one should consider homotopy classes of relative  automorphisms of so called minimal relative dg Lie algebra models. The authors could not find an explicit treatment of the theory of minimal relative dg Lie algebra models in  the literature, so Section \ref{sec:minimalRelModels} is devoted to that. A dg Lie algebra $(\L(V),d)$ will be abbreviated by $\L(V)$.   In particular we prove the following:

\begin{thm}
 Given a map $f\colon \L(V)\to \g$ of simply connected dg Lie algebras, there exists a minimal relative model $q\colon\L(V\oplus W)\xrightarrow\sim\g$ for $f$ in the following sense:
 \begin{itemize}
 \item[{\normalfont (a)}] $\L(V)$ is a dg subalgebra of $\L(V\oplus W)$ and $f = q\circ \iota$, where $\iota\colon \L(V)\to \L(V\oplus W)$ is the inclusion.
 \item[{\normalfont(b)}] Every quasi-isomorphism $ \L(V\oplus W)\to \L(V\oplus W)$, which restricts to an automorphism of $\L(V)$,  is an automorphism.
 \end{itemize}
\end{thm}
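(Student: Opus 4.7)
The plan is to adapt the classical construction of relative minimal Sullivan models to dg Lie algebras. For part~(a), I would build $W$ and $q$ by induction on homological degree. Beginning with $q_1 = f$, suppose inductively that we have $q_n\colon \L(V \oplus W_{\leq n}) \to \g$ extending $f$ for which $H_k(q_n)$ is an isomorphism whenever $k<n$. The induction step has two sub-stages: first adjoin new degree-$n$ generators with zero differential, mapping them bijectively to representative cycles of a chosen basis of $\coker H_n(q_n)$; then adjoin degree-$(n+1)$ generators whose differentials realize representative cycles of a chosen basis of $\ker H_n$ of the updated model, and send each such generator to a cobounding chain in $\g$. Simple-connectedness of $\g$ and $V$ starts the induction. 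Taking unions over $n$ yields a quasi-isomorphism $q\colon\L(V\oplus W)\to\g$ with $\L(V)\hookrightarrow\L(V\oplus W)$ a sub-dg Lie algebra and $f = q\circ\iota$.

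To enforce minimality---understood as $d(W)\subset\L(V) + [\L(V\oplus W),\L(V\oplus W)]$---I would choose the cycle representatives carefully. Cokernel-type generators trivially satisfy the condition since their differential is zero. For kernel-type generators, any summand of a cycle representative that is a single $W$-letter can be removed by adjusting the basis of the kernel or by absorbing it using a previously-adjoined generator whose differential realizes that summand. A careful inductive bookkeeping thus makes the linear-in-$W$ part of $d$ vanish on all new generators.

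For part~(b), let $g\colon\L(V\oplus W)\to\L(V\oplus W)$ be a quasi-isomorphism restricting to an automorphism of $\L(V)$. The strategy is to show that the $W$-component $g_W\colon W\to W$ of $g$ (the linear part modulo $V$ and brackets) is a graded vector-space isomorphism, after which invertibility of $g$ on all of $\L(V\oplus W)$ will follow by induction on the generating degree. Since $g$ preserves $\L(V)$, it descends to a quasi-isomorphism $\bar g$ on the quotient chain complex $C:=\L(V\oplus W)/\L(V)$. Filtering $C$ by bracket length and using relative minimality, the lowest piece of the associated graded is identified with $(W,0)$, a chain complex with zero differential; the induced map on this piece is precisely $g_W$, and a chain self-map of $(W, 0)$ is a quasi-isomorphism iff it is a vector space isomorphism.

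The principal difficulty lies in part~(b): one must set up the filtration so that $\bar g$ respects it, and the associated-graded computation must genuinely identify the lowest layer with $(W, 0)$. The finite-type and simple-connectedness hypotheses on $V$ and $\g$ ensure that each degree has only finitely many generators, which is essential for the spectral-sequence or inductive arguments to close. Part~(a) is comparatively routine but requires maintaining minimality throughout the induction.
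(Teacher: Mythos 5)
Your part (a) is essentially the paper's construction: the same two-stage induction by KS-extensions, adjoining degree-$n$ cocycle generators that surject onto $\coker H_n(q_{n-1})$ and degree-$(n+1)$ generators whose differentials represent $\ker H_n$ (and sending the latter to cobounding chains in $\g$). The minimality bookkeeping you wave at is handled in the paper by recording where the new differentials land (its conditions (c)--(f)); degree reasons then force the linear-in-$W$ part of $d$ to vanish, so no ``absorbing'' of $W$-letters is actually needed. So (a) is fine, if vaguer than it should be at exactly that bookkeeping step.

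Part (b), however, has a genuine gap at its central step. You know that $\bar g\colon C\to C$, $C=\L(V\oplus W)/\L(V)$, is a quasi-isomorphism and that it preserves the word-length filtration whose lowest associated-graded layer is $(W,0)$ with induced map $g_W$; but a filtered map that is a quasi-isomorphism on the total complex need \emph{not} induce a quasi-isomorphism on a filtration layer or on the associated graded. (Already for a two-step filtration $S\subset C$ with $C$ acyclic, the zero map is a quasi-isomorphism of $C$ preserving $S$, yet it is not a quasi-isomorphism on $S$ or on $C/S$ unless those are acyclic.) Spectral-sequence comparison only runs in the other direction: an isomorphism on $E_0$ or $E_1$ plus convergence gives a quasi-isomorphism of totals, while a quasi-isomorphism of totals controls only $E_\infty$. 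So the assertion that $g_W$ is an isomorphism is precisely the content of the theorem, and your filtration argument does not deliver it. The paper's proof goes differently: it inductively constructs a right inverse $g_k$ on $\mathcal M\langle k\rangle=\L(V\oplus W_{\leq k})$ with $fg_k=\id$, using the five lemma to see that $\mathcal M/g_k(\mathcal M\langle k\rangle)\to\mathcal M/\mathcal M\langle k\rangle$ is a quasi-isomorphism, the minimality computation $H_{k+1}(\mathcal M/\mathcal M\langle k\rangle)=W_{k+1}$, and explicit lifts $\xi(w_i)$ to extend $g_k$ to $g_{k+1}$; a symmetric argument applied to $g$ then gives surjectivity. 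An alternative repair in the spirit of FHT Theorem 14.11 would use a homotopy inverse together with the fact that homotopic maps out of a minimal object have the same linear part, but in the relative setting that inverse and homotopy must be taken relative to $\L(V)$, which itself needs justification; either way, something beyond your associated-graded claim is required.
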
This is proved in Theorem \ref{thm:minimalModel} and Theorem \ref{thm:hautOfMinimal}.
Here, $\L(A)$ denotes a quasi-free dg Lie algebra generated by $A$, i.e. a dg Lie algebra whose underlying graded Lie algebra structure is free. 

Lastly, we relate Theorem \ref{thm:AA} to the relative homotopy automorphisms of certain Postnikov stages of finite CW-complexes. The $n$-Postnikov stage of a space $X$ is denoted by $X(n)$.
\begin{lemma}
Let $X$ be a simple finite CW-complex and let $m \geq \dim(X)$. Pick $X(m)$ so that $X \to X(m)$ is a cofibration (and thus $A\to X(m)$ is also a cofibration). Then there is an isomorphism of groups $\pi_0(\aut_A(X(m)))\cong \pi_0(\aut_A(X))$.
\end{lemma}
This is the main result of Section \ref{sec:finite-postnikov}. We conclude from this lemma and  Theorem \ref{thm:AA}  the following:
\begin{cor}\label{cor:kupers}
Let $A \subset X$ be a cofibration of simply connected finite CW-complexes and let $m \geq dim(X)$. Then $\pi_0(\aut_A(X(m)))$ is finitely presented.\end{cor}

\subsubsection*{On the results of Scheerer and Maruyama} In   \cite{scheerer80}, the arithmeticity of the group of homotopy classes of fiber homotopy equivalences is announced. In the article, Scheerer suggests that an Eckmann-Hilton-dualization of his arguments would yield the arithmeticity of the group of homotopy classes of relative homotopy automorphisms. However, Eckmann-Hilton duality is  a non-formal duality and details have to be spelled out. In particular, the Eckmann-Hilton dual of the ``Postnikov tower construction" is the ``homology decomposition construction", which differs in  the following crucial sense; Postnikov towers can be made functorial, while  homology decompositions can not \cite[Proposition 6.13]{baues95}. Since  Scheerer's arguments rely on the functoriality of the Postnikov tower construction, we do not see an immediate way of dualizing all arguments.

Maruyama \cite{maruyama} proves that certain subgroups of $\pi_0(\aut(X))$ are finitely presentable. Since $\pi_0(\aut_A(X))$ is not a subgroup of $\pi_0(\aut(X))$, we can not deduce finite presentability directly from the results of Maruyama. However, Maruyama's results might  provide some information about $\pi_0(\aut_A(X))$ in the following way: Let $i\colon A\subset X$ be a cofibration of nilpotent finite CW-complexes and let $[i]\in [A,X]$ be the homotopy class of $i$. We have that $\pi_0(\aut(X))$ acts on $[A,X]$ by post-composition. Let $\pi_0(\aut(X))_{[i]}$ be the stabilizer group of $[i]$. This group is finitely presented by \cite[2.7 (2)]{maruyama}. A study of the canonical map $\pi_0(\aut_A(X))\to \pi_0(\aut(X))_{[i]}$, might yield insights about  $\pi_0(\aut_A(X))$. 

\subsubsection*{Overview}
In Section \ref{sec:finiteKer}, we prove that the map $\pi_0(\aut_A(X))\to \pi_0(\aut_{A_\Q}(X_\Q))$ induced by rationalizations has a finite kernel provided that $A$ and $X$ are of the homotopy type of finite simply connected CW-complexes. 

In Section \ref{sec:minimalRelModels}, we prove the existence and the properties of minimal dg Lie algebra models for a wide class of maps of dg Lie algebras.

In Section \ref{sec:AlgGrps}, we use the theory of minimal dg Lie models in order to prove that $\pi_0(\aut_{A_\Q}(X_\Q))$ is a linear algebraic group and that the map $\pi_0(\aut_{A_\Q}(X_\Q))\to \pi_0(\aut(X_\Q))$ induced by the inclusion $\aut_{A_\Q}(X_\Q)\hookrightarrow \aut(X_\Q)$ is a homomorphism of linear algebraic groups. 

In Section \ref{sec:finitePres}, we prove that $\pi_0(\aut_A(X))$ is finitely presented using algebraicity of $\pi_0(\aut_{A_\Q}(X_\Q))$ and the map $\pi_0(\aut_{A_\Q}(X_\Q))\to \pi_0(\aut(X_\Q))$.

In Section \ref{sec:finite-postnikov}, we prove that if $X$ is an $n$-dimensional CW-complex then there is an isomorphism of groups $\pi_0(\aut_A(X(n)))\cong \pi_0(\aut_A(X))$ where $X(n)$ is the $n$-Postnikov stage of $X$. This yields Corollary \ref{cor:kupers}.

\subsubsection*{Standing assumptions and notation}

\begin{itemize}
\item A rationalization of a simply connected space $X$ is denoted by $X_\Q$, and a rationalization of a map of simply connected spaces $f\colon X\to Y $ is denoted by 
$f_\Q\colon X_\Q\to Y_\Q$. By \cite{farjoun}, there are functorial and continuous rationalization functors that preserve cofibrations. In particular, given a cofibration $A\subset X$, there is a rationalization functor that induces  a group homomorphism $\pi_0(\aut_A(X))\to \pi_0(\aut_{A_\Q}(X_\Q))$.

\item The notion commensurability in both \cite{sullivan77}  and \cite{wilkerson76} differs from the current common usage of
this term nowadays (see e.g. \cite{K-RW}). We will follow the terminology of \cite{K-RW} and say say that two groups are \textit{commensurable up to finite kernel} if there exists a zigzag of maps
$$G \rightarrow G_1\leftarrow\dots \rightarrow G_n\leftarrow H$$ where each map has finite kernel and the image has finite index. 

We say that two groups $G$ and $H$  are \textit{commensurable} if there are subgroups $G'\subset G$ and $H'\subset H$ of finite index such that $G'\cong H'$.

If $G$ and  $H$ are commensurable up to finite kernel and $G$ is finitely presented then $H$ is also finitely presented.
\end{itemize}

\begin{center}\textbf{Acknowledgements}\end{center}
We are very thankful to Alexander Berglund, Muriel Livernet, Alexander Kupers and the anonymous referee  for their very careful reading of this paper and for pointing out mistakes in previous versions of this paper.
 
We would also like to thank Joana Cirici and Agustí Roig\footnote{Agustí Roig sadly passed away in September 2021.} for suggesting how minimal relative models for a map of operad algebras should be defined. We applied their ideas for the special case of dg Lie algebra maps, which is the topic of Section \ref{sec:minimalRelModels}. Agustí Roig mentioned that the ideas he shared with us have to a large extent already been suggested by Daniel Tanré to him.

We would also like to thank Shun Wakatsuki for several useful discussions.

The second named author was supported by the Knut and Alice Wallenberg Foundation through grant no. 2019.0521. 
 
\section{Finite kernel}\label{sec:finiteKer}
We will briefly recall some of the basic obstruction theory that is covered in many introductory books in algebraic topology (e.g. \cite[§VII.13]{bredonGT}). After that we prove that $\pi_0(\aut_A(X))\to \pi_0(\aut_{A_\Q}(X_\Q))$ has finite kernel.

Let $Y(k)$ denote the $k$-Postnikov stage of $Y$ for $k>0$  and let $Y(0)$ be just be a point which we call the zero Postnikov stage, and let $\rho_k\colon Y\to Y(k)$, $k\geq 0$, denote the structure map for  the $k$-Postnikov stage. 

\begin{prop}\label{prop:Obstr}
Let $A\subset X$ be a cofibration and let $Y$ be a simple space. Let $f,g\colon X\to Y$ be maps that agree on $A$.

Assume that $\rho_kf \simeq_{\rel A} \rho_kg$ for some $k\geq 0$. Then there exists an associated obstruction cohomology class $d(\rho_k f,\rho_k g)\in H^{k+1}(X,A;\pi_{k+1}(Y))$ which satisfies the following properties:
\begin{itemize}
\item[\text{\normalfont (a)}] $\rho_{k+1}f$ and $\rho_{k+1}g$ are homotopic relative to $A$ if and only if $d(\rho_k f,\rho_k g)=0$. 
\item[\text{\normalfont (b)}] The association $(\rho_k f,\rho_k g)\mapsto d(\rho_k f,\rho_k g)$ is natural with respect to maps $Y\to Y'$ and maps of pairs $(X',A')\to (X,A)$.
\item[\text{\normalfont (c)}] Given a third map $h\colon X\to Y$ that agrees with $f$ on $A$ and where $\rho_k h\simeq_{\rel A} \rho_k f$, we get that the obstruction classes satisfy the relation $d(\rho_k f,\rho_k h) = d(\rho_k f,\rho_k g)+d(\rho_k g,\rho_k h)$.
\item[\text{\normalfont (d)}] If $\rho_k f\simeq_{\rel A}\rho_k g$ for all $k$, i.e. all obstruction classes vanish, then $f\simeq_{\rel A} g$.
\end{itemize}
\end{prop}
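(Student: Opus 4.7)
The plan is to reduce the construction to the standard obstruction-theoretic lifting problem through the principal fibration
$$K(\pi_{k+1}(Y),k+1)\longrightarrow Y(k+1)\xrightarrow{\,p_{k+1}\,} Y(k).$$
Since $\pi_1(Y)$ is abelian, each Postnikov stage is a simple space and this fibration is classified by a $k$-invariant in $H^{k+2}(Y(k);\pi_{k+1}(Y))$; this is what makes the difference-cochain machinery of \cite[\S VII.13]{bredonGT} available.

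First, I fix a relative homotopy $H\colon X\times I\to Y(k)$ from $\rho_k f$ to $\rho_k g$. Then $\rho_{k+1}f$ and $\rho_{k+1}g$ lift $H|_{X\times\partial I}$ through $p_{k+1}$, while the constant homotopy $t\mapsto \rho_{k+1}(f|_A)=\rho_{k+1}(g|_A)$ lifts $H|_{A\times I}$. Together these assemble into a partial lift of $H$ defined on the subspace $X\times\partial I\cup A\times I$. Standard obstruction theory assigns to this partial lift an obstruction cocycle whose cohomology class lies in
$$H^{k+2}\bigl(X\times I,\;X\times\partial I\cup A\times I;\;\pi_{k+1}(Y)\bigr)\;\cong\; H^{k+1}(X,A;\pi_{k+1}(Y)),$$
the isomorphism being the suspension isomorphism coming from $(X,A)\times(I,\partial I)$. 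I would define $d(\rho_k f,\rho_k g)$ to be this class, after verifying that different admissible choices of $H$ yield cohomologous cocycles.

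Properties (a)--(c) then follow by routine manipulations. For (a), the class vanishes if and only if the partial lift extends to all of $X\times I$, which is precisely a relative homotopy $\rho_{k+1}f\simeq_{\rel A}\rho_{k+1}g$. For (b), I would invoke functoriality of the Postnikov tower (giving compatible maps $Y(k)\to Y'(k)$ inducing $\pi_{k+1}(Y)\to\pi_{k+1}(Y')$) together with naturality of obstruction cocycles under maps of pairs $(X',A')\to(X,A)$. For (c), concatenating $H_{fg}$ and $H_{gh}$ yields a homotopy $H_{fh}=H_{fg}*H_{gh}$ from $\rho_k f$ to $\rho_k h$, and a cocycle-level computation shows that the obstruction of $H_{fh}$ is the sum of the obstructions of $H_{fg}$ and $H_{gh}$.

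The main obstacle is property (d), where the vanishing of every obstruction must be upgraded to a single relative homotopy $f\simeq_{\rel A} g$. The plan is to build, by induction on $k$, a compatible tower of relative homotopies $H_k\colon X\times I\to Y(k)$ satisfying $p_{k+1}\circ H_{k+1}\simeq H_k$ rel $X\times\partial I\cup A\times I$. The subtlety is that the obstruction class at stage $k+1$ depends on the chosen homotopy $H_k$, so one may have to modify earlier $H_j$'s by classes in $H^j(X,A;\pi_j(Y))$ to make the liftings coherent up the tower. Once such a coherent tower is constructed, one passes to the limit using that $Y\simeq\lim Y(k)$ together with a Milnor-type $\lim^1$ argument applied to $[X,Y(k)]_{\rel A}$, yielding the required relative homotopy.
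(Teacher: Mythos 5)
The paper itself offers no proof of this proposition: it is stated as a recollection of standard obstruction theory with a pointer to \cite[\S VII.13]{bredonGT}, and only its finite-dimensional consequence (Remark \ref{rmk:postnikov}) is used later. Your sketch reconstructs exactly that standard material along the expected lines --- lifting a chosen rel-$A$ homotopy $H$ of the $k$-th stages through the fibration $Y(k+1)\to Y(k)$ and reading off the obstruction in $H^{k+2}(X\times I, X\times\partial I\cup A\times I;\pi_{k+1}(Y))\cong H^{k+1}(X,A;\pi_{k+1}(Y))$ --- so the route is the right one. The difficulties sit precisely in the two places you declare routine.

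First, the well-definedness you defer (``different admissible choices of $H$ yield cohomologous cocycles'') is not a verification but the crux, and as stated it is false in general: the obstruction is essentially the class of $k^{k+2}\circ H$ rel the prescribed subspace, so it depends on the rel-$A$ homotopy class of $H$; two choices of $H$ differ by a self-homotopy of $\rho_kf$, i.e.\ roughly a map of type $\Sigma(X/A)\to Y(k)$, and the two obstructions differ by the pairing of that class with the $k$-invariant, which is generically nonzero. Without handling this indeterminacy (either by recording an obstruction \emph{set} and reading (a) as ``$0$ lies in it'', or by fixing the homotopies as part of the data) your ``only if'' half of (a) does not follow --- a rel-$A$ homotopy of $\rho_{k+1}f$ and $\rho_{k+1}g$ lifts \emph{some} homotopy of the $k$-th stages, not necessarily your chosen $H$ --- and (c) for independently chosen homotopies is likewise not justified. (Relatedly, abelian $\pi_1(Y)$ alone does not make the Postnikov stages simple; one also needs the $\pi_1$-action on the higher homotopy groups to be trivial, otherwise the coefficients are local. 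This imprecision is inherited from the statement, but your proof leans on it.) Second, for (d) a ``Milnor-type $\lim^1$ argument'' does not \emph{yield} the required homotopy: the $\lim^1$ term of the Milnor sequence for the Postnikov tower is exactly the obstruction to promoting stagewise rel-$A$ homotopies to a genuine one, and it need not vanish for a general connected $X$ (phantom-map phenomena), so the last step of your induction-plus-limit scheme does not close as written. What the paper actually needs, and all it uses, is the finite-dimensional case recorded in Remark \ref{rmk:postnikov}, where $H^{k+1}(X,A;\pi_{k+1}(Y))=0$ for $k\geq\dim X$ makes the induction terminate after finitely many stages with no limiting argument at all; if you want (d) in the stated generality you must impose a Mittag-Leffler or vanishing condition on the relevant tower.
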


\begin{rmk}\label{rmk:postnikov} It follows from Proposition \ref{prop:Obstr} (a) and (d) that if $X$ is $n$-dimensional, then it is enough to show that $\rho_nf\simeq_{\rel A}\rho_ng$ in order to deduce that $f\simeq_{\rel A} g$.
\end{rmk}

\begin{thm}\label{Thm:finiteKer}
Given a cofibration $A\subset X$ where $A$ is nilpotent and $X$ is simple, and both are of the homotopy type of finite CW-complexes, the map $\pi_0(\aut_A(X))\to \pi_0(\aut_{A_\Q}(X_\Q))$ has finite kernel.
\end{thm}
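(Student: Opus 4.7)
The plan is to use obstruction theory (Proposition~\ref{prop:Obstr}) to bound the kernel by torsion in finitely generated cohomology groups, with the rationality hypothesis forcing obstruction classes to be torsion. Let $K$ denote the kernel. Since $X$ has the homotopy type of an $n$-dimensional finite CW-complex, Remark~\ref{rmk:postnikov} gives $[f]=[\id_X]$ as soon as $\rho_n f\simeq_{\rel A}\rho_n\id_X$. Accordingly, I would introduce the descending filtration
$$K=K_1\supseteq K_2\supseteq\dots\supseteq K_n=\{[\id_X]\},\qquad K_k=\{[f]\in K\,:\,\rho_k f\simeq_{\rel A}\rho_k \id_X\},$$
and check using the naturality of the Postnikov construction that each $K_k$ is a subgroup of $K$.

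Next, to each $[f]\in K_k$ I would associate the obstruction class $d_k(f):=d(\rho_k\id_X,\rho_k f)\in H^{k+1}(X,A;\pi_{k+1}(X))$ of Proposition~\ref{prop:Obstr}. Part~(a) says $d_k(f)=0$ iff $[f]\in K_{k+1}$. Combining the cocycle identity~(c) with naturality~(b) applied to $g^{-1}$ regarded as a self-map of the pair $(X,A)$, one shows that $d_k(f)=d_k(g)$ is equivalent to $fg^{-1}\in K_{k+1}$. Hence $d_k$ descends to an injection of sets $K_k/K_{k+1}\hookrightarrow H^{k+1}(X,A;\pi_{k+1}(X))$.

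The rationality hypothesis enters as follows: for $[f]\in K$ one has $f_\Q\simeq_{\rel A_\Q}\id_{X_\Q}$, so all obstructions over $(X_\Q,A_\Q)$ vanish. By naturality of $d$ under the rationalization map of pairs $(X,A)\to(X_\Q,A_\Q)$, together with the fact that this map is a rational equivalence, $d_k(f)$ must be torsion in $H^{k+1}(X,A;\pi_{k+1}(X))$. Since $(X,A)$ is a finite CW pair and $\pi_{k+1}(X)$ is finitely generated (by Serre's theorem applied to the simply connected finite CW complex $X$), this cohomology group is finitely generated, and its torsion subgroup is therefore finite. Consequently each coset set $K_k/K_{k+1}$ is finite, so by induction along the filtration $K$ itself is finite.

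The main obstacle I foresee is the bookkeeping in the second paragraph: carefully invoking the cocycle and naturality properties of $d$ to identify the fibers of $d_k$ with cosets of $K_{k+1}$, since naturality against $g^{-1}$ introduces an a priori nontrivial action on $H^{k+1}(X,A;\pi_{k+1}(X))$ that must be accounted for. A secondary concern is ensuring that the integral obstruction class rationalizes compatibly with $(X,A)\to(X_\Q,A_\Q)$, which should follow from functoriality of Postnikov towers and the Bousfield-Kan rationalization assumed in the paper.
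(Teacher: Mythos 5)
Your proposal is correct, and it rests on exactly the same ingredients as the paper's proof (the obstruction classes of Proposition~\ref{prop:Obstr}, Remark~\ref{rmk:postnikov}, torsionness of the obstruction via naturality under $(X,A)\to(X_\Q,A_\Q)$, and finiteness of the torsion subgroup of $H^{k+1}(X,A;\pi_{k+1}(X))$ for a finite pair with finitely generated coefficients), but the bookkeeping is organized differently. The paper argues by contradiction: it assumes $K$ infinite, sends each $f\neq\id$ to its first nonvanishing obstruction, repeatedly extracts an infinite fiber and passes to higher stages, and derives a contradiction once the top dimension is reached; no group structure on the fibers is needed. You instead filter $K$ by the subgroups $K_k$ of classes agreeing with the identity at the $k$-th Postnikov stage rel $A$, and bound each quotient $K_k/K_{k+1}$ by the finite torsion group; this is a cleaner direct argument and even yields the explicit bound $|K|\le\prod_k|\Tor(H^{k+1}(X,A;\pi_{k+1}(X)))|$, at the modest extra cost of verifying that each $K_k$ is a subgroup and that $d_k$ is constant exactly on cosets, both of which follow from functoriality of the Postnikov tower (compose rel-$A$ homotopies with the induced maps $f(k)$). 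One simplification to your flagged obstacle: you do not need naturality~(b) against $g^{-1}$ at all, so the worry about an induced action on $H^{k+1}(X,A;\pi_{k+1}(X))$ evaporates; the additivity property~(c) alone gives $d(\rho_k f,\rho_k g)=d_k(g)-d_k(f)$, hence $d_k(f)=d_k(g)$ iff $\rho_{k+1}f\simeq_{\rel A}\rho_{k+1}g$ by~(a), and Postnikov functoriality then identifies this with $g^{-1}f\in K_{k+1}$ (this is also how the paper compares two elements $g,h$ with equal obstruction). Naturality~(b) is only needed, as in the paper, for the rationalization step showing $d_k(f)$ is torsion.
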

\begin{proof}
 Let $\rho_k\colon X\to X(k)$ and $\rho_k^\Q\colon X_\Q\to X_\Q(k)$ be the structure maps for the $k$-Postnikov stages for $X$ and $X_\Q$ respectively, and let $$K = \ker(\pi_0(\aut_A(X))\to \pi_0(\aut_{A_\Q}(X_\Q))).$$
Given an element $f\in K\smallsetminus \{\id_X\}$, we have that  $f\not\simeq_{\rel A} \id_X$  but $f_\Q\simeq_{\rel A_\Q} \id_{X_\Q}$. Since $X(0)=*$, it follows that $\rho_0f = \rho_0\id_X$. Given that 
$f\not\simeq_{\rel A} \id_X$, there must be some $k$, $0<k<\dim(X)$, for which $\rho_{k+1} f\not\simeq_{\rel A} \rho_{k+1}\id_X$ but for which $\rho_{k}f\simeq_{\rel A} \rho_{k}\id_X$. We associate to $f$ the non-zero class $d(\rho_kf,\rho_k)\in H^{k+1}(X,A;\pi_{k+1}(X))$. However, since $f_\Q\simeq_{\rel A_\Q} \id_{X_\Q}$, it follows that $d(\rho_k^\Q f_\Q,\rho^\Q_k)=0$, and due to the naturality of these obstructions (see Proposition \ref{prop:Obstr} (a)), it follows that $d(\rho_kf,\rho_k)\in \Tor(H^{k+1}(X,A;\pi_{k+1}(X))$, where $\Tor$ denotes the torsion part. This gives us a well defined map 
$$\alpha\colon K\smallsetminus\{\id_X\}\to \bigsqcup_{i=1}^{\dim(X)} \Tor(H^i(X,A;\pi_i(X))).$$
Now assume to get a contradiction that $K$ is infinite. This means that $\alpha$ is a map from an infinite set to a finite set, and we may pick some obstruction class $$c\in \Tor(H^{t+1}(X,A;\pi_{t+1}(X)))$$ where $\alpha^{-1}(c)$ is infinite. Pick some  $g\in \alpha^{-1}(c)$. For  any other class $ h\in\alpha^{-1}(c)$ distinct from $g$ we have that 
$d(\rho_t g,\rho_t) = d(\rho_t h,\rho_t)$ and thus by Proposition \ref{prop:Obstr} (c), $\rho_{t+1} g\simeq_{\rel A} \rho_{t+1} h$. Since $g\not\simeq_{\rel A} h$ there must be some $k'$, $t<k'<\dim(X)$, for which $\rho_{k'+1}g\not\simeq_{\rel A}\rho_{k'+1}h$ but where $\rho_{k'}g\simeq_{\rel A}\rho_{k'}h$. This gives a nontrivial obstruction class 
$d(\rho_{k'}g,\rho_{k'}h)\in H^{k'+1}(X,A;\pi_{k'+1}(X))$, and as before we get a map
$$
\alpha'\colon \alpha^{-1}(c)\smallsetminus\{g\}\to \bigsqcup_{i'=t+1}^{\dim(X)} \Tor(H^{i'}(X,A;\pi_{i'}(X))).
$$
After a finite number of iterations of this procedure we will reach a function
$$\mathscr A\colon S\to \Tor(H^{d}(X,A;\pi_{d}(X))),$$ 
where $d=\dim(X)$ and $S$ is an infinite set. Assume that $p,q\in \mathscr A^{-1}(x)$. Then it would mean that $\rho_d p \simeq_{\rel A} \rho_d q$, and thus $p\simeq_{\rel A} q$ (see Remark \ref{rmk:postnikov}), i.e. $p$ and $q$ are identified in $\pi_0(\aut_A(X))$. Thus $\mathscr A$ is injective, which is impossible due to the infinite cardinality of $S$. From the contradiction we conclude that $K$ is finite.
\end{proof}

\section{On minimal relative  dg Lie algebras}\label{sec:minimalRelModels}
This section is inspired by \cite{CiriciRoig}, in which minimal models for algebras over algebraic operads are constructed. We adopt the general framework and generalize some of the theory to  relative dg Lie algebras. We have chosen to only focus on dg Lie algebras, but we believe that the results of this section can be obtained for algebras over  algebraic operads in general.

 The notion of a minimal relative dg Lie algebras seems to not have attained much of attention in the literature. In \cite[§ 22 (f)]{felixrht}, relative dg Lie algebras are discussed, but not in the minimal setting.

\begin{conv} \begin{itemize}
    \item In this section we enumerate vector spaces in a collection by upper indices;  $\{V^1,V^2,\dots\}$. The lower indices are reserved for denoting the homological degree. 
    \item A quasi free dg Lie algebra $(\L(V),d)$ will be abbreviated by $\L(V)$. 
\end{itemize} 
\end{conv}

We start by recalling the notion of KS-extensions of a quasi-free dg Lie algebra. 

\begin{dfn}
A \textit{KS-extension} of a quasi-free dg Lie algebra $\L(V)$ is a quasi-free dg Lie algebra $\L(V\oplus U)$, that contains $\L(V)$ as a dg Lie subalgebra and in which $d(U)\subset Z_*(\L(V))$, where $Z_*(\L(V))$ denotes the cycles of $\L(V)$.
\end{dfn}

\begin{dfn}\label{dfn:RelLieAlg}
A  \textit{relative dg Lie algebra} with \textit{base} $\L(V)$ is a colimit $\mathcal M= \bigcup_{i\geq 0} \mathcal M[n]$ of a sequence
$$
\mathcal M[0] = \L(V)\to \mathcal M[1] = \L(V\oplus W^1)\to\mathcal M[2]= \L(V\oplus W^1\oplus W^2)\to \cdots
$$
of  KS-extensions starting from  the base Lie algebra $\mathcal M[0] =\L(V)$. We have that $\mathcal M=\L(V\oplus W)$ where $W= \bigoplus_{i=1}^\infty W^i$.
\end{dfn}

\begin{dfn}\label{dfn:MinimalRelLieAlg} Let $\pi_W\colon \L(V\oplus W)\to W$ denote the canonical projection. A relative dg Lie algebra $\L(V\oplus W)$ with base $\L(V)$ is called  \textit{minimal} if $\pi_W\circ d(W)=0$
\end{dfn}

\begin{dfn}
 Given a map $f\colon \L(V)\to \g$ of dg Lie algebras, a \textit{(minimal)  Lie model for $f$} is a (minimal) relative Lie algebra $\L(V\oplus W)$ with base $\L(V)$ together with a quasi-isomorphism
 $q\colon\L(V\oplus W)\to \g$, which satisfies the equality $f = q\circ \iota$, in which $\iota\colon \L(V)\to \L(V\oplus W)$ is the canonical inclusion.
\end{dfn}

\begin{dfn}
A dgl $\g$ is said to be \textit{simply connected} if it is concentrated in positive degrees; $\g = \g_{\geq 1}$.
\end{dfn}

We prove the existence of minimal relative models for a class of dg Lie algebra maps.

\begin{thm}\label{thm:minimalModel}
 Given a map $f\colon \L(V)\to \g$ of simply connected dg Lie algebras, there exists a minimal relative model for $f$.
\end{thm}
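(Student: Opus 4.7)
The plan is to construct $\mathcal M = \L(V \oplus W)$ as a directed colimit of KS-extensions $\L(V) = \mathcal M[0] \subset \mathcal M[1] \subset \cdots$, together with compatible chain maps $q_n\colon \mathcal M[n] \to \g$ extending $f$, so that the colimit map $q = \colim q_n$ is a quasi-isomorphism and the minimality condition $\pi_W\circ d(W)=0$ holds on every adjoined generator. Since $\L(V)$ and $\g$ are simply connected, I would proceed by induction on homological degree, adjoining at the passage from $\mathcal M[n]$ to $\mathcal M[n+1]$ only generators of degree $n+1$, and maintaining as inductive hypothesis that $H_k(q_n)$ is an isomorphism for $k<n$ and surjective for $k=n$; the base case is vacuous by simple connectivity.

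For the inductive step I would adjoin a graded vector space $W^{n+1}$ concentrated in degree $n+1$, split into two flavors. ``Injectivity'' generators $w_\beta$ indexed by a basis of $\ker H_n(q_n)$, with $dw_\beta = c_\beta$ a chosen cycle representative of the class and $q_{n+1}(w_\beta) = \eta_\beta$, where $\eta_\beta \in \g_{n+1}$ is chosen to satisfy $d\eta_\beta = q_n(c_\beta)$ (such $\eta_\beta$ exists because $[q_n(c_\beta)]=0$ in $H_n(\g)$ by hypothesis). ``Surjectivity'' generators $w_\alpha$ indexed by a basis of $\coker H_{n+1}(q_n)$, with $dw_\alpha=0$ and $q_{n+1}(w_\alpha) = z_\alpha$ a cycle representative of the cokernel class. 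Routine checks confirm that $q_{n+1}$ is a well-defined chain map extending $q_n$, that the new $w_\beta$ turn the previous kernel in degree $n$ into boundaries, and that the new $w_\alpha$ hit the missing classes in degree $n+1$; thus $H_k(q_{n+1})$ becomes an isomorphism for $k \le n$ and surjective for $k=n+1$. Passing to the colimit yields the desired quasi-isomorphism.

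The principal obstacle is ensuring $\pi_W(dw_\beta)=0$, i.e.\ that the cycle representatives $c_\beta$ of $\ker H_n(q_n)$ can be chosen with zero $W$-linear part. The key observation is that in $\mathcal M[n]$ the $W$-linear part of a degree-$n$ cycle is a well-defined invariant of its homology class: boundaries of elements in $\mathcal M[n]_{n+1}$ have no $W$-linear part, since $\mathcal M[n]$ contains no degree-$(n+1)$ $W$-generators, and differentials of $V$-elements remain in $\L(V)$, while differentials of brackets remain decomposable. My plan is to show this invariant vanishes on $\ker H_n(q_n)$. The degree-$n$ $W$-generators all originate from stage $n$ and split into surjectivity cycles $w_{\alpha'}$ whose classes $[q(w_{\alpha'})]=[z_{\alpha'}]$ form a basis of a chosen complement of $\im H_n(q_{n-1})$ in $H_n(\g)$, and injectivity non-cycles $w_{\beta'}$ with $dw_{\beta'}=c_{\beta'}$. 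For a cycle $c$ with $[q(c)]=0$, the linear independence of the $[z_{\alpha'}]$ modulo $\im H_n(q_{n-1})$ immediately forces the $w_{\alpha'}$-coefficients to vanish. Eliminating the $w_{\beta'}$-coefficients is the subtler step, which I would handle by strengthening the inductive hypothesis so that the $c_{\beta'}$ themselves are chosen linearly independent in $\mathcal M[n-1]$ and with zero $W$-linear part, and then extracting the $w_{\beta'}$-coefficients from the cocycle equation $dc=0$ by careful bookkeeping of the decomposable contributions. The resulting representative has $\pi_W(c_\beta)=0$, so the new differential is minimal, and iterating the construction produces the required minimal relative model for $f$.
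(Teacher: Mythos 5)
Your construction is essentially the paper's own: an inductive sequence of KS-extensions adjoining, in each degree, cycle generators hitting $\coker H_*(q_n)$ and kernel-killing generators whose differentials are cycle representatives with vanishing $W$-linear part, with minimality secured exactly as you indicate (boundaries in the relevant degree have no $W$-linear part, and the previously adjoined non-cycle generators cannot appear in cycles). The only divergence is bookkeeping --- the paper kills $\ker H_k(q_{k-1})$ \emph{before} adjoining the degree-$k$ surjectivity cycles, so the coefficient analysis you flag as the ``subtler step'' is absorbed into its conditions (e)--(f) --- and that step does close along the lines you sketch, since the non-$W$-linear remainder of such a cycle lies in the previous stage, making any nontrivial combination of the independent kernel classes a boundary there.
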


\begin{proof} 
 We will construct a sequence of minimal KS-extensions $$
\mathcal M[0] = \L(V)\to \mathcal M[1] = \L(V\oplus W^1)\to\mathcal M[2]= \L(V\oplus W^1\oplus W^2)\to \cdots
$$
where each $\mathcal M[n]$ is equipped with a map $q_n\colon \mathcal M[n]\to \g$ such that $q_0 = f$ and
\begin{itemize}
\item[(a)] $q_n$ is an extension of $q_{n-1}$,
\item[(b)] $H_i(q_n)$ is an isomorphism for $i\leq n$,
\item[(c)] $W^n$ is decomposed as $W^n = A^n\oplus B^n$, where $A^n$ is of homogeneous degree $n$ and $B^n$ is of homogeneous degree $n+1$,
\item[(d)] $d(A^n) = 0$,
\item[(e)] $d(B^n)\subseteq \L(V\oplus W(n-2)\oplus A^{n-1})$, where $W(n-2)= \bigoplus_{i=1}^{n-2}W^i$ if $n\geq 3$ and where $W(-2)=W(-1)=W(0)=0$.
\item[(f)] $d|_{B^n}$ is injective and $d(B^n)\cap B_n(\mathcal M[n-1])=\{0\}$, where $B_n(\mathcal M[n-1])$ denotes the $n$-boundaries of $\mathcal M[n-1]$. 
\end{itemize}
Conditions (a) and (b) gives that the colimit $\mathcal M = \bigcup \mathcal M[i]$ is a relative model for $f$. Conditions (c)-(e) gives that $\mathcal M$ is also minimal (note that $d(B^n)$ is homogeneous of degree $n$ and belongs to a free Lie algebra whose generators are either elements of $V$ or elements of maximal degree $n-1$, which by degree reasons gives the relative minimality properties). Condition (f) is a technical condition needed for the inductive construction of the minimal model as described below.

The conditions above are trivially satisfied for $n=0$ (in which we set $\mathcal M[-1] = \mathcal M[0]$, $q_{-1}=q_0=f$ and $W^0=0$), since $\L(V)$ and $\g$ are concentrated in strictly positive degrees. Assume inductively that (a)-(f) are satisfied for $n\leq k-1$ for some fixed $k$. We want to show that this implies that (a)-(f) are satisfied for $n=k$.

Let 
$$A^k= \coker(H_k(q_{k-1})\colon H_k(\mathcal M[k-1])\to H_k(\g)),$$
and let $a_k\colon A^k\to \g_k$ be given by the  composition
$$
a_k\colon A^k\xrightarrow\sigma H_k(\g)\xrightarrow\tau Z_k(\g)\hookrightarrow\g_k
$$ where $\sigma$ and $\tau$ are some sections to the projections $H_k(\g)\twoheadrightarrow A^k$ and $Z_k(\g)\twoheadrightarrow H_k(\g)$ respectively. Let $\mathcal M'[k]=\L(V\oplus W(k-1)\oplus A^k)$ be an extension of $\mathcal M[k-1]$, where $d|_{A^k}=0$. Now we have an extension $ q'_k\colon\mathcal M'[k]\to \g$, of $q_{k-1}$, where $ q'_k|_{A^k}=a_k$. We have that $H_i(q'_k)$ is an isomorphism for $i<k$ and an epimorphism for $i=k$ (by construction of $a_k\colon A^k\to \g$). 

Let $B^k = s(\ker(H_k(q_{k-1})\colon H_k(\mathcal M[k-1])\to H_k(\g)))$ be concentrated in degree $k+1$ and let $\mathcal M[k] = \L(V\oplus W(k-1)\oplus A^k\oplus B^k)$ and let $d|_{B^k}$ be given by the following composition:
$$d|_{B^k}\colon B^k\hookrightarrow sH_k(\mathcal M[k-1])\xrightarrow{s^{-1}}H_k(\mathcal M[k-1])\xrightarrow\nu Z_k(\mathcal M[k-1])$$
where $\nu$ is a section of the projection $Z_k(\mathcal{M}[k-1])\twoheadrightarrow H_k(\mathcal M[k-1])$. 

It follows from the way we defined $d|_{B^k}$ that $\im(q_{k-1} \circ d|_{B^k})\subset B_k(\g)$ (where $B_k(\g)$ denotes the boundaries in $\g$ of degree $k$). Let $\zeta\colon B_k(\g)\to g_{k+1}$ be a section of the differential restricted to $\g_{k+1}$ and let $q_k$ be the extension of $q'_k$ where $q_k|_{B^k} = \zeta\circ q_{k-1} \circ d|_{B^k}$.

This process kills the kernel in homological degree $k$ of the epimorphism $H_k(q'_k)$, which makes $H_k(q_k)$ into a monomorphism and thus an isomorphism. Hence, the conditions (a)-(d) and (f) are satisfied  for $n=k$, by construction. 

Due to degree reasons we have that
$$\mathcal M[k-1]_k =\L(V\oplus W(k-2)\oplus A^{k-1})_k\oplus B^{k-1}.$$ 
Thus for a given $b\in B^k$, there is some $a\in \L(V\oplus W(k-2)\oplus A^{k-1})_k$ and some $b'\in B^{k-1}$, such that $db = a +b'$. 
Consequently, $da+db'=0$. Since we are assuming that (f) holds for $n=k-1$, it follows that $b'=0$, since if $b'\neq 0$, then by (f) we have that $db'\neq0$ and $db'\neq -da$, making the equality $da+db'=0$ impossible. From this, we conclude that $db= a\in \L(V\oplus W(k-2)\oplus A^{k-1})_k$, giving us property (e).
\end{proof}

\begin{thm}[\text{cf. \cite[Theorem 14.11]{felixrht}}]\label{thm:hautOfMinimal}
Given a simply connected minimal relative Lie algebra $\L(V\oplus W)$ with base $\L(V)$, let $f\colon \L(V\oplus W)\to \L(V\oplus W)$ be a quasi-isomorphism. Suppose that $f$ restricts to an automorphism of $\L(V)$, then $f$ is an automorphism.
\end{thm}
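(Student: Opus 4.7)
The plan is to adapt the classical argument behind \cite[Theorem~14.11]{felixrht} (for minimal Sullivan algebras) to the present relative dg Lie setting. The overall strategy is to reduce to showing that the linear part $Q(f)\colon V\oplus W\to V\oplus W$ (the projection of $f$ onto the indecomposables) is a graded linear isomorphism; once this is done, a standard filtration by bracket length, combined with the fact that $V\oplus W$ is simply connected (so in each homological degree only finitely many bracket lengths contribute), upgrades the isomorphism on the indecomposables to an isomorphism on all of $\L(V\oplus W)$.

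First I would exploit the hypothesis that $f(V)\subseteq \L(V)$ and that $f|_{\L(V)}$ is an automorphism of $\L(V)$. The first gives that the matrix of $Q(f)$ in the block decomposition $V\oplus W$ is upper triangular with vanishing $V\to W$ block. The second, combined with the standard fact that the linear part of a graded automorphism of a free graded Lie algebra on simply connected generators is itself an isomorphism (again by a bracket-length filtration), gives that the diagonal block $Q(f)|_V^V$ is an iso. Thus the task reduces to showing that the remaining block $Q(f)|_W^W\colon W\to W$ is an isomorphism.

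For this, I would pass to the quotient dg Lie algebra $\bar{\M}:=\L(V\oplus W)/I(V)\cong\L(W)$, where $I(V)$ is the dg Lie ideal of $\L(V\oplus W)$ generated by $V$. Since $f(V)\subseteq \L(V)\subseteq I(V)$ and $I(V)$ is an ideal, the map $f$ preserves $I(V)$ and induces a dg Lie map $\bar f\colon \bar{\M}\to \bar{\M}$. The induced differential $\bar d$ on $\L(W)$ satisfies $\pi_W\bar d(W)=0$ (inherited from $\pi_W d(W)=0$), so $(\L(W),\bar d)$ is a minimal free dg Lie algebra in the classical, non-relative sense, and a direct computation shows $Q(\bar f)=Q(f)|_W^W$. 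If I can show that $\bar f$ is a quasi-isomorphism, then the non-relative Lie-algebra analogue of \cite[Theorem~14.11]{felixrht}, applied to $\bar f$, yields that $\bar f$ is an automorphism, so that $Q(f)|_W^W$ is an isomorphism and the proof is complete.

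The main obstacle will be showing that $\bar f$ is a quasi-isomorphism. Applying the five-lemma to the long exact sequence of homology associated to the short exact sequence $0\to I(V)\to \L(V\oplus W)\to \bar{\M}\to 0$, this is equivalent to showing that $f|_{I(V)}$ is a quasi-iso. To establish this, I would filter $I(V)$ by the number of $V$-generators appearing in a bracket. This filtration is preserved by both $d$ and $f$, and via Lazard's elimination theorem one identifies the associated graded with a free Lie algebra on a space of the form $U(\L(W))\otimes V$, on which the action of $f$ is controlled by the known isomorphism $Q(f|_{\L(V)})$ on the $V$-factor. A spectral sequence analysis, with convergence ensured by the simple connectedness of $V\oplus W$, should then yield that $f|_{I(V)}$ is a quasi-iso, finishing the argument.
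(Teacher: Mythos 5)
Your overall architecture (reduce to the linear part $Q(f)$, pass to the quotient $\bar{\M}=\L(V\oplus W)/I(V)\cong\L(W)$, note that minimality of the relative algebra makes $(\L(W),\bar d)$ minimal in the classical sense, and invoke the non-relative statement) is sound, and the reductions you do carry out are correct: $Q(f)$ is block triangular, $Q(f)|_V^V$ is invertible, and $Q(\bar f)=Q(f)|_W^W$, so everything hinges on $\bar f$ being a quasi-isomorphism. But your proposed proof of that key fact is circular. Via Lazard elimination, $I(V)$ is free on $U(\L(W))\otimes V$, and the associated graded of $f|_{I(V)}$ with respect to your filtration by the number of $V$-generators is \emph{not} controlled by $Q(f|_{\L(V)})$ alone: on the generating space $U(\L(W))\otimes V$ it is (up to the $V$-factor) the map induced by $\bar f$ through $U(\L(W))$, since reducing modulo brackets with more $V$-generators kills precisely the $V$-components of $f(w)$ and leaves the $\L(W)$-component, i.e.\ $\bar f(w)$. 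So to run the spectral sequence comparison you would already need to know that $\bar f$ (equivalently $U(\bar f)$) is a quasi-isomorphism, which is exactly what the whole reduction is meant to produce. Note also that, given that $f$ is a quasi-isomorphism, the five lemma makes ``$f|_{I(V)}$ is a quasi-isomorphism'' equivalent to ``$\bar f$ is a quasi-isomorphism'', so the detour through $I(V)$ gains nothing by itself.

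The statement you need is true and can be obtained by a non-circular argument: $\bar{\M}$ is the pushout of $0\leftarrow\L(V)\to\L(V\oplus W)$ along a cofibration between cofibrant dg Lie algebras, and the triple $(0,\,f|_{\L(V)},\,f)$ is a weak equivalence of such pushout diagrams, so the gluing lemma gives that $\bar f$ is a quasi-isomorphism; with that input your outline closes up into a correct alternative proof. The paper avoids these issues altogether and argues more elementarily: it filters $\M=\L(V\oplus W)$ by $\M\la k\ra=\L(V\oplus W_{\leq k})$, uses minimality to compute $H_{k+1}(\M/\M\la k\ra)=W_{k+1}$, and inductively constructs chain maps $g_k$ with $fg_k=\id$ starting from $g_0=(f|_{\L(V)})^{-1}$, obtaining a right inverse $g$ of $f$ and then repeating the argument for $g$; no quotient Lie algebras, linear parts, or properness-type statements are needed there. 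As it stands, though, your write-up has a genuine gap at its central step.
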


\begin{proof}
Let $W_{\leq k}\subseteq W$ be the subspace of elements of at most degree $k$ and let $\mathcal M\la k\ra= \L(V\oplus W_{\leq k})$. Note that $f(\mathcal M\la k\ra)\subseteq\mathcal M\la k\ra$ by degree reasons, hence for every $k\geq 0$, we have that $f$  defines a chain map $\mathcal M\la k\ra\to\mathcal M\la k\ra$.  We start by showing that for every $k\geq0$ there exists a chain map $g_k\colon \mathcal M\la k\ra\to\mathcal M\la k\ra$ such that $fg_k= \id$. In particular $g_k$ is a monomorphism. For $k=0$, this is trivially true, since we are assuming that the restriction of $f$ to $\L(V)$ is an automorphism, so we can set $g_0 = (f|_{\L(V)})^{-1}$. Assume inductively that the statement is true for some $k\geq 0$.  
The map $f$ induces a map of short exact sequences of chain complexes
$$
\xymatrix{0\ar[r]&g_k(\mathcal M\la k\ra)\ar[r]\ar[d]_{f|_{g_k(\mathcal M\la k\ra)}}& \mathcal M\ar[d]_f\ar[r]& \mathcal M/g_k(\mathcal M\la k\ra)\ar[d]_{\bar f}\ar[r]&0
\\
0\ar[r]&\mathcal M\la k\ra\ar[r]& \mathcal M\ar[r]& \mathcal M/\mathcal M\la k\ra\ar[r]&0.}
$$
 The condition $fg_k=\id$ gives that $f|_{g_k(\mathcal M\la k\ra)}$ is surjective.  Since $g_k\circ f|_{g_k(\mathcal M\la k\ra)} = \id_{g_k(\mathcal M\la k\ra)}$, it follows that $f|_{g_k(\mathcal M\la k\ra)}$ is also injective. Hence  $f|_{g_k(\mathcal M\la k\ra)}$ is an isomorphism (and thus also a quasi-isomorphism).

Since $f$ and  $f|_{g_k(\mathcal M\la k\ra)}$ are quasi-isomorphisms, it follows that $\bar f$ is a quasi-isomorphism.
 We have that $(\mathcal M/\mathcal M\la k\ra)_{k+1}=W_{k+1}$. By degree reasons, all elements of $W_{k+1}$ are cycles in $\mathcal M/\mathcal M\la k \ra$, i.e.  $Z_{k+1}(\mathcal M/\mathcal M\la k \ra)= W_{k+1}$. By minimality we have that $B_{k+1}(\mathcal M/\mathcal M\la k \ra)=0$, so it follows that 
$$H_{k+1}(\mathcal M/\mathcal M\la k\ra) =Z_{k+1}(\mathcal M/\mathcal M\la k\ra) = W_{k+1}.$$
Since $\bar f$  is a quasi-isomorphism, we get an epimorphism
$$Z_{k+1}(\mathcal M/g_k(\mathcal M\la k\ra))\twoheadrightarrow W_{k+1}.$$

Let $\xi\colon W_{k+1}\to \M_{k+1}$ be given by the following composition
$$
\xi\colon W_{k+1}\to Z_{k+1}(\mathcal M/g_k(\mathcal M\la k\ra))\hookrightarrow(\mathcal M/g_k(\mathcal M\la k\ra))_{k+1}\to \M_{k+1}
$$ 
where the first and the last maps are sections to the epimorphisms $Z_{k+1}(\mathcal M/g_k(\mathcal M\la k\ra))\twoheadrightarrow W_{k+1}$ and $\M_{k+1}\twoheadrightarrow (\M/g_k(\M\la k\ra))_{k+1}$ respectively. Fix a  basis $w_1,\dots,w_a$ for $W_{k+1}$. One deduces that $f(\xi(w_i)) = w_i + a_i$ for some $a_i\in \M\la k \ra$, and  that $d(\xi(w_i)) = g_k(a'_i)$ for some $a'_i\in \M\la k\ra$ (since $\xi(w_i)$ is a cycle in $\M/g_k(\M\la k\ra)$). Let $g_{k+1}$ be an extension of $g_k$ to $\M\la k+1\ra$ where $g_{k+1}(w_{i}) = \xi(w_i)-g_k(a_i)$. In particular we have that
$$f(g_{k+1}(w_i))= f(\xi(w_i)-g_k(a_i)) = w_i.$$

Using the equality $w_i =f(\xi(w_i)-g_k(a_i))$ (from above) one deduces that 
$$ g_{k+1}(dw_i) =g_k(a_i-da_i') = d(g_{k+1}(w_i)),$$
which shows that $g_{k+1}$ is a chain map. This concludes the inductive step. Now let $g = \lim g_k$ and we have that $fg = \id_\M$ which proves that $g$ is an injective quasi-isomorphism. Applying the same arguments to $g$ gives that there is a map $h$ such that $gh = \id_\M$ which proves that $g$ is surjective. Thus $g$ is an isomorphism and $f$ is its inverse.
\end{proof}

\section{Algebraic groups and relative homotopy automorphisms }\label{sec:AlgGrps}
The rational homotopy category of simply connected pointed spaces is modelled by the homotopy category of simply connected dg Lie algebras (\cite{quillen69}). In particular, Quillen constructed a functor $\lambda\colon \mathrm{Top}_*^\text{1-conn}\to \text{DGL}_\Q^\text{1-conn}$, from the category of simply connected pointed spaces to the category of simply connected dg Lie algebras  (a dg Lie algebra is called simply connected if it is concentrated in strictly positive degrees), that induces the equivalence of homotopy categories mentioned above.

Moreover, the category of dg Lie algebras is enriched over simplicial sets; let $L$ and $\Pi$ be two dg Lie algebras, then the simplicial mapping space is given by
$$
\mathrm{Map}(L,\Pi)_\bullet = \Hom(L,\Pi\ot\Omega_\bullet)
$$
where $\Omega_\bullet$ denote the simplicial commutative dg algebra in which $$\Omega_n = \Lambda(t_0,\dots,t_n,dt_1,\dots,dt_n)/(1-\sum t_i,\sum dt_i)$$ is the Sullivan-de Rham algebra of polynomial differential forms on the $n$-simplex (see \cite[§ 10 (c)]{felixrht} for details). We recall that the tensor product $\Pi\ot \Omega$ of a dg Lie algebra $\Pi$ with a commutative dg algebra $\Omega$ is again a dg Lie algebra, where $[\ell_1\ot c_1, \ell_2\ot c_2] = (-1)^{|c_1||l_2|}[\ell_1,\ell_2]\ot c_1c_2$.

\begin{lemma}\label{mk}
Let $X$ and $Y$ be simply connected spaces where $Y$ is of finite type, and let $\L_X$ and $\L_Y$ be quasi-free models for $X$ and $Y$ respectively. Then there is a weak equivalence 
$$
\mathrm{Map}(\L_X,\L_Y)_\bullet \simeq \mathrm{Map}_*(X_\Q,Y_\Q).
$$
\end{lemma}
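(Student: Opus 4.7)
My plan is to identify both sides as derived mapping spaces in Quillen-equivalent model categories, then invoke preservation of derived mapping spaces under Quillen equivalence. First I would verify that $\Hom_{\dgl}(\L_X, \L_Y \otimes \Omega_\bullet)$ actually computes the derived mapping space $\mathbf{R}\mathrm{Map}_{\dgl}(\L_X, \L_Y)$ in Quillen's projective model structure on $\dgl$. The key ingredients are: (i) each $\Omega_n$ is quasi-isomorphic to $\k$, and the augmentation $\Omega_\bullet \to \k$ (evaluation at a vertex) exhibits $\L_Y \otimes \Omega_\bullet$ as a Reedy-fibrant simplicial frame of $\L_Y$; (ii) every dg Lie algebra is fibrant in Quillen's model structure, and $\L_X$ is cofibrant by hypothesis, so no further fibrant/cofibrant replacement is required.

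Second, I would invoke Quillen's theorem \cite{quillen69}, which states that $\lambda$ is part of a Quillen equivalence between simply connected pointed rational simplicial sets and simply connected dg Lie algebras over $\Q$. Since Quillen equivalences preserve derived mapping spaces up to weak equivalence, this yields
\[
\mathbf{R}\mathrm{Map}_{\dgl}(\L_X, \L_Y) \simeq \mathbf{R}\mathrm{Map}_*(X_\Q, Y_\Q) \simeq \mathrm{Map}_*(X_\Q, Y_\Q),
\]
the last step using fibrancy of $Y_\Q$ and cofibrancy of $X_\Q$ (after replacement if needed). The pointedness on the target is forced by the fact that $\lambda$ lands in dg Lie algebras, which model the loop space $\Omega X_\Q$ via the Samelson bracket and therefore naturally encode \emph{based} maps.

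The main obstacle is the first step, namely checking compatibility between the explicit simplicial enrichment $\Hom(-, -\otimes \Omega_\bullet)$ and the abstract homotopy mapping space of the model structure. To avoid a direct technical verification one may instead pass to the CDGA side via the Chevalley-Eilenberg cochain functor $\mathcal C^*$, obtaining a weak equivalence
\[
\Hom_{\dgl}(\L_X, \L_Y \otimes \Omega_\bullet) \simeq \Hom_{\cdga}(\mathcal C^*(\L_Y), \mathcal C^*(\L_X) \otimes \Omega_\bullet),
\]
whose right-hand side is the standard Bousfield-Guggenheim/Haefliger model for the pointed mapping space of rational spaces. The finite-type hypothesis on $Y$ is precisely what ensures that $\mathcal C^*(\L_Y)$ is a bona fide Sullivan model of $Y$ (with finite-dimensional generators in each degree), so that classical Sullivan theory then identifies the right-hand side with $\mathrm{Map}_*(X_\Q, Y_\Q)$.
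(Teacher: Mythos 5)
Your proposal is correct, and your ``fallback'' route is in fact exactly the paper's proof: the paper disposes of the lemma in two lines by citing the analogous statement for commutative dg models (Brown--Szczarba, the simplicial set $\Hom_{\mathrm{cdga}}(A_Y, A_X\otimes\Omega_\bullet)$ modelling the mapping space of rationalizations) and transferring it across the contravariant adjunction $\mathcal L\colon \mathrm{cdga}\leftrightharpoons\mathrm{dgl}\colon C^*_{CE}$, with the finite-type hypothesis on $Y$ playing precisely the role you assign to it. Your primary route --- identifying $\Hom_{\mathrm{dgl}}(\L_X,\L_Y\otimes\Omega_\bullet)$ as a derived mapping space via the simplicial frame $\L_Y\otimes\Omega_\bullet$ (all objects fibrant, $\L_X$ cofibrant) and then invoking that Quillen's equivalence preserves derived mapping spaces --- is a genuinely different and more conceptual argument that the paper does not attempt; it buys independence from the Sullivan-side literature, but at the cost of the compatibility check you yourself flag, namely that the explicit $\Omega_\bullet$-enrichment computes the homotopy-theoretic mapping space in Quillen's model structure. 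Note also that your displayed comparison $\Hom_{\mathrm{dgl}}(\L_X,\L_Y\otimes\Omega_\bullet)\simeq\Hom_{\mathrm{cdga}}(C^*(\L_Y),C^*(\L_X)\otimes\Omega_\bullet)$ is not a purely formal consequence of the adjunction (one needs $C^*$ of a tensor with $\Omega_n$ compared with the tensor of $C^*$, or a unit/counit argument), and that matching the \emph{based} mapping space on the Sullivan side requires keeping track of augmentations; the paper's own proof is silent on both points, so these are refinements rather than gaps relative to the paper's standard of detail.
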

\begin{proof}
This follows from \cite[Theorem 2.2.5]{hinich01} and \cite[Corollary 3.17]{BS19} (see also \cite[§ 3.6]{berglund17}).
\end{proof}

 We note that $\Omega_1$ is isomorphic to $\Lambda(t,dt)$ as an algebra. From this one can deduce the following notion of homotopy:

\begin{dfn}
Let $\Lambda(t,dt)$ denote the free graded commutative algebra on $t$ and $dt$ where $|t|=0$ and $|dt|=1$, endowed by a differential that takes $t$ to $dt$. For any $s\in\Q$, let $ev_s\colon \Lambda(t,dt)\to \Q$ be the unique commutative dg algebra map given by $ev_s(t) = s$.
 
Two dg Lie algebra maps  $\varphi,\psi\colon L\to \Pi$ are \textit{homotopic} if and only if there exists a dg Lie algebra map $h\colon L\to \Pi\ot \Lambda(t,dt)$ such that $(\id\ot ev_0)\circ h = \varphi$ and $(\id\ot ev_1)\circ h= \psi$.
\end{dfn}

\begin{rmk}
The notion of homotopic maps make sense over any algebra $R$ over $\Q$; just replace $\Lambda(t,dt)$ by $\Lambda_R(t,dt)$.
\end{rmk}

\begin{dfn}
A dg Lie algebra morphism between quasi-free dg Lie algebras $\varphi\colon \L(V)\to \L(V')$ is called a \textit{free map} if it is injective and if $\varphi(V)\subseteq V'$. 
\end{dfn}
\begin{rmk}
The free maps are precisely the cofibrations between quasi-free dg Lie algebras in the model category of positively graded dg Lie algebras defined by Quillen (see the remark after \cite[Proposition II.5.5]{quillen69}).
\end{rmk}

\begin{dfn}
Let $\iota\colon\L_A\to \L_X$ be a free map of quasi-free dg Lie algebras. 
We say that an endomorphism $\varphi$ of $\L_X$ is  \textit{$\iota$-relative}  if the diagram
\begin{equation*}
\xymatrix{&\L_A\ar[dl]_{\iota}\ar[dr]^{\iota}&\\
\L_{X}\ar[rr]_\varphi&&\L_{X}}
\end{equation*}
commutes strictly. 
We say that two $\iota$-relative endomorphisms $\varphi$ and $\psi$ are \textit{$\iota$-equivalent} if there exists a homotopy $h\colon \L_X\to \L_X\ot \Lambda(t,dt)$ from $\varphi$ to $\psi$ that preserves $\L_A$ in the following sense: $h(y)= y\ot1$ for every $y\in \L_A$ (cf. \cite[§ 14 (a)]{felixrht}). In this case we can also say that $\varphi$ and $\psi$ are homotopic relative $\L_A$, and is denoted by $\varphi\simeq_{\rel \L_A} \psi$. As in \cite[§ II.5]{tanre83}, one proves that the relation of being homotopic relative to $\L_A$ is an equivalence relation such that if $\varphi_1\simeq_{\rel \L_A} \psi_1$ and $\varphi_2\simeq_{\rel \L_A} \psi_2$, then  $\varphi_2\circ\varphi_1\simeq_{\rel \L_A}\psi_2\circ\psi_1$.
\end{dfn}

\begin{prop}\label{prop:groupofcomponents}
Let $\iota\colon\L_A\to \L_X$ be a free map that models a based cofibration  $A\subset X$ of simply connected spaces. Then the monoid $\pi_0(\End_{A_\Q}(X_\Q))$ is isomorphic to the monoid
\begin{equation}\label{hu}\{\varphi\colon\L_X\to\L_X\ |\ \varphi \text{ is $\iota$-relative}\}/\iota\text{-equivalence}\end{equation}
\end{prop}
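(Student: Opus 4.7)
The plan is to identify both sides as $\pi_0$ of a suitable homotopy fiber, with Lemma \ref{mk} serving as the bridge between the topological and algebraic pictures. Topologically, $\End_{A_\Q}(X_\Q)$ is the fiber of the restriction map $\mathrm{Map}_*(X_\Q, X_\Q) \to \mathrm{Map}_*(A_\Q, X_\Q)$ over $\iota_\Q$, and an analogous fiber description holds at the dg Lie level.

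To set up the algebraic fiber, I would observe that since $\iota\colon \L_A \to \L_X$ is a cofibration between cofibrant dg Lie algebras and every object of $\dgl$ is fibrant, the induced restriction map of simplicial mapping spaces
$$\rho\colon \mathrm{Map}(\L_X,\L_X)_\bullet \longrightarrow \mathrm{Map}(\L_A,\L_X)_\bullet$$
is a Kan fibration. Let $F_\bullet$ denote its fiber over the vertex $\iota \in \mathrm{Map}(\L_A, \L_X)_0$. By construction an $n$-simplex of $F_\bullet$ is a dg Lie algebra map $\varphi\colon \L_X \to \L_X \ot \Omega_n$ whose precomposition with $\iota$ equals $\iota \ot 1$. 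Hence the 0-simplices of $F_\bullet$ are exactly the $\iota$-relative endomorphisms of $\L_X$, and using the isomorphism $\Omega_1 \cong \Lambda(t, dt)$ recalled above, a 1-simplex of $F_\bullet$ is precisely an $\iota$-equivalence between its two endpoints. Composition equips $F_\bullet$ with a simplicial monoid structure, and $\pi_0(F_\bullet)$ becomes the monoid displayed in \eqref{hu}.

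On the topological side, the restriction map $\mathrm{Map}_*(X_\Q, X_\Q) \to \mathrm{Map}_*(A_\Q, X_\Q)$ is a fibration whose fiber over $\iota_\Q$ is $\End_{A_\Q}(X_\Q)$. Applying Lemma \ref{mk} to both the source and target of $\rho$, and invoking its naturality in the source variable, I would produce a commuting square of weak equivalences between $\rho$ and this topological fibration. Passing to fibers yields a weak equivalence $F_\bullet \simeq \End_{A_\Q}(X_\Q)$; taking $\pi_0$ then gives the desired monoid isomorphism, the monoid structures matching because they are both induced by composition.

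The main obstacle is the compatibility claim: one must verify that the comparison in Lemma \ref{mk} is natural in the source with respect to $\iota$, so that the algebraic and topological restriction maps fit into a commutative square up to the given equivalences. This reduces to the naturality of Quillen's functor $\lambda$ together with the unit/counit of its Quillen equivalence, so no new input beyond standard rational homotopy theory is required.
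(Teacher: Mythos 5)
Your proposal is correct and follows essentially the same route as the paper: the paper also realizes the monoid \eqref{hu} as $\pi_0$ of the fiber over $\iota$ of the Kan fibration $\End(\L_X)_\bullet\to\mathrm{Map}(\L_A,\L_X)_\bullet$ and compares this fiber with $\End_{A_\Q}(X_\Q)$ via Lemma \ref{mk}. Your extra remarks on identifying $0$- and $1$-simplices and on the naturality of the comparison just make explicit steps the paper leaves as "straightforward."
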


\begin{proof}
We have that the restriction map $\End(\L_X)_\bullet\to \mathrm{Map}(\L_A,\L_X)_\bullet$ is a Kan fibration with fiber above $\iota$,  denoted by $\mathrm{End}_{\L_A}(\L_X)_\bullet$,  given by the pullback of $\Delta^0_\bullet\to \mathrm{Map}(\L_A,\L_X)_\bullet \leftarrow \End(\L_X)_\bullet$, where $\Delta^0_0$ is sent to $\iota$. As a simplicial set we have that 
$$\End_{\L_A}(\L_X)_\bullet = \{f_\bullet \in\mathrm{Map}(\L_X,\L_X\ot \Omega_\bullet)\ |\ f_n(y) = y\ot 1,\ \forall y\in\L_A\}$$
This together with Lemma \ref{mk}, implies that
$\End_{\L_A}(\L_X)_\bullet$ is weakly equivalent to $\End_A(X)$ as a monoid. In particular, they have isomorphic monoids of components. It is straightforward to show that  $\pi_0(\End_{\L_A}(\L_X)_\bullet)$ is given by the set in \eqref{hu}.
\end{proof}

\begin{cor}\label{cor:pi-noll}
Let $\iota\colon\L(V)\to \L(V\oplus W)$ be a minimal relative dg Lie model  for a based cofibration  $A\subset X$ of simply connected spaces. Then the group $\pi_0(\aut_{A_\Q}(X_\Q))$ is isomorphic to the group $$\{\varphi\colon\L(V\oplus W)\to\L(V\oplus W)\ |\ \varphi \text{ is a $\iota$-relative automorphism}\}/\iota\text{-equivalence}.$$
\end{cor}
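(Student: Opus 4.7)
The plan is to deduce this corollary from Proposition \ref{prop:groupofcomponents} by identifying the group of invertible elements (the ``units'') in the monoid \eqref{hu}. By definition, $\aut_{A_\Q}(X_\Q)$ is the union of those components of $\End_{A_\Q}(X_\Q)$ consisting of weak equivalences; since $A\subset X$ is a cofibration, a relative self-map that is a weak equivalence admits a relative homotopy inverse, so $\pi_0(\aut_{A_\Q}(X_\Q))$ is exactly the group of units of the monoid $\pi_0(\End_{A_\Q}(X_\Q))$. Transporting this across the isomorphism of Proposition \ref{prop:groupofcomponents}, a class $[\varphi]$ in \eqref{hu} is a unit if and only if some (equivalently, any) representative $\varphi\colon \L(V\oplus W)\to \L(V\oplus W)$ is a quasi-isomorphism, since a map between cofibrant dg Lie algebras is invertible in the homotopy category precisely when it is a quasi-isomorphism.

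The key step is then to upgrade homotopy invertibility to an honest automorphism, and this is where minimality of the relative model is used. If $\varphi$ is $\iota$-relative, then by the very definition of $\iota$-relative ($\varphi\circ\iota=\iota$) the restriction of $\varphi$ to the base $\L(V)$ is the identity, and in particular an automorphism of $\L(V)$. Assuming further that $\varphi$ is a quasi-isomorphism, Theorem \ref{thm:hautOfMinimal}, applied to the minimal relative model $\L(V)\hookrightarrow \L(V\oplus W)$, gives that $\varphi$ is an automorphism of $\L(V\oplus W)$.

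Conversely, every $\iota$-relative automorphism $\varphi$ represents a unit: its strict inverse $\varphi^{-1}$ is again $\iota$-relative, because $\varphi\circ\iota=\iota$ forces $\iota=\varphi^{-1}\circ\iota$, and $\varphi^{-1}$ realizes a two-sided inverse to $[\varphi]$ in \eqref{hu}. Combining the two directions, the group of units of \eqref{hu} is exactly $\{\iota\text{-relative automorphisms}\}/\iota\text{-equivalence}$, which is the description claimed for $\pi_0(\aut_{A_\Q}(X_\Q))$. The only step carrying real content is the invocation of Theorem \ref{thm:hautOfMinimal}; everything else is a formal unpacking of what ``unit'' means in the monoid \eqref{hu}, so I do not anticipate any genuine obstacle beyond what is already absorbed into the minimality theorem of Section \ref{sec:minimalRelModels}.
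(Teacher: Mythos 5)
Your argument is correct and follows essentially the same route as the paper: identify $\pi_0(\aut_{A_\Q}(X_\Q))$ with the units of the monoid of Proposition \ref{prop:groupofcomponents}, then use Theorem \ref{thm:hautOfMinimal} to recognize those units as the $\iota$-relative automorphisms modulo $\iota$-equivalence. The paper's proof is just a terser version of exactly this, so no further changes are needed.
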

\begin{proof}
Since a quasi-isomorphism $\L(V\oplus W)\to \L(V\oplus W)$ that preserves $\L(V)$ elementwise is an automorphism (Theorem \ref{thm:hautOfMinimal}), the assertion follows as a consequence of Proposition \ref{prop:groupofcomponents}. 
\end{proof}

\begin{lemma}\label{lemma:finiteDim}
Let $\L(V)\to \L(V\oplus W)$ be a minimal relative  dg Lie model for a based cofibration $A\subset X$ of simply connected spaces, where $\L(V)$ is a minimal model for $A$. If both $H^*(A;\Q)$ and $H^*(X;\Q)$ are finite dimensional, then $V$ and $W$ are finite dimensional. 
\end{lemma}

\begin{proof}
Since $\L(V)$ is a minimal model for a space $A$, it follows that $V\cong s^{-1}\tilde H_*(A)$ (see e.g. \cite[§ 24 (b)]{felixrht}), which gives that $\dim(V)<\infty$. Moreover, due to the Quillen equivalence between the model category of simply connected dg Lie algebras and the model category of simply connected rational pointed topological spaces, we get that the homotopy (co)limit of a diagram of Lie models, models the homotopy (co)limit of their geometric realizations. In particular $(\L(W),\bar d)$, with the  differential induced from $(\L(V\oplus W),d)$, is a minimal model for $X/A$, which has finite dimensional homology. This gives that $\dim(W)<\infty$.
\end{proof}

\begin{prop}\label{prop:autmorphismsOfLieAlg}
Given  a finitely generated, simply connected, relative dg Lie algebra $\L(V\oplus W)$ with base $\L(V)$, the group of automorphisms of $\L(V\oplus W)$ that fix the dg subalgebra $\L(V)$ pointwise, denoted by $\Aut_{\L(V)}(\L(V\oplus W))$, is isomorphic to the group of $\Q$-points of a linear algebraic group over $\Q$, denoted by $\Aaut_{\L(V)}(\L(V\oplus W))$
\end{prop}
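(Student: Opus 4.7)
The plan is to realize $\Aut_{\L(V)}(\L(V\oplus W))$ as a Zariski-closed subgroup of $\GL(E)$ for a suitable finite-dimensional truncation $E$ of $\L(V\oplus W)$. Since $V\oplus W$ is finite-dimensional and concentrated in strictly positive degrees, each homogeneous component $\L(V\oplus W)_k$ is finite-dimensional, because a Lie word of total degree $k$ in generators of degree $\geq 1$ has length at most $k$. I would set $K=\max\{|w|\mid w\in W\}$ and $E=\bigoplus_{k=1}^K \L(V\oplus W)_k$. Every $\iota$-relative dg Lie endomorphism $\varphi$ preserves homological degree, so restricts to an element of $\End(E)$, and since $\varphi$ is uniquely determined by its values on $W\subseteq E$, the assignment $\varphi\mapsto\varphi|_E$ gives an injection $\Aut_{\L(V)}(\L(V\oplus W))\hookrightarrow\GL(E)$.

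Inside $\GL(E)$ I would identify the image as the subvariety cut out by the polynomial conditions
\begin{itemize}
\item[(i)] $f(v)=v$ for $v$ in a basis of $V$,
\item[(ii)] $f([x,y])=[f(x),f(y)]$ for all pairs of basis elements $x,y$ of $E$ with $|x|+|y|\leq K$,
\item[(iii)] $df(w)=f(dw)$ for $w$ in a basis of $W$.
\end{itemize}
Condition (ii) is bilinear, and (i) and (iii) are linear, in the matrix entries of $f$. By freeness of $\L(V\oplus W)$ on $V\oplus W$, any $f\in\GL(E)$ satisfying (i)--(iii) extends uniquely to an $\iota$-relative dg Lie endomorphism $\tilde f$ of $\L(V\oplus W)$.

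The main obstacle I expect to address is upgrading $\tilde f$ from an endomorphism to an automorphism of the (possibly infinite-dimensional) algebra $\L(V\oplus W)$. For this I would filter $\L(V\oplus W)$ by bracket length; $\tilde f$ preserves this filtration, and the induced map on the associated graded is the free Lie functor applied to the linear part $\overline{\tilde f}\colon V\oplus W\to V\oplus W$. Since $\overline{\tilde f}|_V=\id_V$, one has $\det\overline{\tilde f}=\det\varphi_W$, where $\varphi_W\colon W\to W$ is the $W$-component of $\overline{\tilde f}|_W$. A block-triangular analysis of $f|_{\L(V\oplus W)_k}$ using the bracket-length filtration shows that $f\in\GL(E)$ is equivalent to $\varphi_W$ being invertible, and in that case invertibility of $\overline{\tilde f}$ gives invertibility of $\tilde f$ on each $\L(V\oplus W)_k$, by finiteness of the filtration in a fixed homological degree. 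Finally, composition of $\iota$-relative automorphisms corresponds to matrix composition in $\GL(E)$, and inversion is polynomial by Cramer's rule, so $\Aut_{\L(V)}(\L(V\oplus W))$ is a closed subgroup of $\GL(E)\cong\GL_n(\Q)$ defined by polynomial equations.
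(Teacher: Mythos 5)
Your overall strategy is the same as the paper's: pass to a finite-dimensional truncation of $\L(V\oplus W)$ (the paper takes $\Sigma=\L(V\oplus W)_{\leq m}$ with $m$ the top degree of \emph{all} generators; your truncation at the top degree of $W$ also works, since a relative automorphism is the identity on $V$ and hence determined by its values on $W$), and then carve out the group inside the general linear group of that truncation by polynomial equations. Your extra step --- showing that an $f$ satisfying the equations extends to an honest automorphism by reducing to invertibility of $\varphi_W$ via the bracket-length filtration, which is finite in each homological degree --- is exactly the point the paper's proof leaves implicit when it asserts the bijection with a subgroup of $\GL(\Sigma)$, so spelling it out is a genuine improvement.

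There is, however, one missing condition, and without it your description of the image is false as stated: you must also impose that $f$ preserves the homological grading of $E$ (this is the paper's condition (i)), which is an additional set of linear equations. Conditions (i)--(iii) alone can be satisfied by degree-mixing elements of $\GL(E)$ that restrict no relative dg Lie automorphism. For example, take $V=0$ and $W$ spanned by generators $w_2,w_3$ of degrees $2$ and $3$ with $d=0$. Then $K=3$, $E$ is spanned by $w_2,w_3$, condition (i) is empty, (iii) holds trivially, and (ii) is vacuous because every pair of basis elements has $|x|+|y|\geq 4>K$; so your locus is all of $\GL_2(\Q)$, whereas the image of $\Aut_{\L(V)}(\L(V\oplus W))$ consists only of the degree-preserving (diagonal) matrices. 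Note also that your extension step genuinely needs homogeneity: the universal property of the free \emph{graded} Lie algebra produces $\tilde f$ only from a degree-preserving assignment on generators, and the filtration/associated-graded argument identifying $\det\overline{\tilde f}$ with $\det\varphi_W$ likewise presupposes it. Adding degree preservation (block-diagonality, a polynomial condition) repairs the argument completely, after which your proof agrees in substance with the paper's.
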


\begin{proof}
Let $\{v_1,\dots,v_a\}$ and $\{w_1,\dots,w_b\}$ be bases for $V$ and $W$ respectively. Let $$m = \max(|v_1|,\dots,|v_a|,|w_1|,\dots,|w_b|),$$ and let $\Sigma= \L(V\oplus W)_{\leq m}$. Note that $\Sigma$ is closed under the differential and thus the restriction of the differential to $\Sigma$ defines a linear operator $D\colon \Sigma\to \Sigma$. Moreover, note that if $[a,b]\in \Sigma$ then $a,b\in \Sigma$.  Let $\beta =\{x_1,\dots,x_r, x_{r+1},\dots ,x_{s}\}$ be a basis for $\Sigma$ where $x_i$ is of homogeneous degree and where $\{x_1,\dots, x_r\}$ is a basis for $\L(V)_{\leq m}\subseteq \Sigma$. Let $\Aaut_{\L(V)}(\L(V\oplus W))$ be the algebraic subgroup of $\GL(\Sigma)$ 
that is defined by the automorphisms of $\Sigma$ that are (i) degree preserving, (ii) commutes with the linear operator $D$, (iii) satisfies $\varphi[a,b] = [\varphi(a),\varphi(b)]$, (iv) $\varphi(x_i) =x_i $ for $1\leq i \leq r$. Note that conditions (i)-(iv) are all algebraic, which is easily seen when considering $\varphi$ as a matrix relative to the basis $\beta$. Since $\Sigma$ is finite dimensional $\Q$-vector space, it follows that  $\Aaut_{\L(V)}(\L(V\oplus W))$ is a linear algebraic group over $\Q$.
\end{proof}

\begin{dfn} Let $L$ be a dg Lie algebra. A \textit{derivation} $\theta$ on $L$ is a linear map $\theta\colon L\to L$ that satisfies $\theta([a,b]) = [\theta(a),b]+ (-1)^{|a||\theta|}[a,\theta(b)]$. The space of derivations on  $L$ is denoted by $\Der(L)$ and has a dg Lie algebra structure where the Lie bracket and the differential $D$ is given by
$$
[\theta,\eta] = \theta \circ \eta - (-1)^{|\theta||\eta|}\eta\circ\theta
\qquad\text{and}\qquad
D(\theta) = [d,\theta].
$$
where $d$ is the differential of $L$.
\end{dfn}

\begin{lemma}\label{lemma:tgtSpaceOfAutomorphismsIsDerivations}
Given  a finitely generated, simply connected, relative dg Lie algebra $\L(V\oplus W)$ with base $\L(V)$, the Lie algebra of the linear algebraic group $\Aaut_{\L(V)}(\L(V\oplus W))$ is given by the zero cycles of the dg Lie algebra
$$\Der(\L(V\oplus W)\|\L(V)):= \{\delta\in \Der(\L(V\oplus W))\,|\,\delta(x)=0, \, \forall x\in \L(V)\}.
$$
\end{lemma}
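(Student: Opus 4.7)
The plan is to identify the Lie algebra of the linear algebraic group $G:=\Aut_{\L(V)}(\L(V\oplus W))$ via the standard description of the tangent space at the identity in terms of $\k[\epsilon]/(\epsilon^2)$-points, and then to match those conditions with the definition of $Z_0\Der(\L(V\oplus W)\|\L(V))$. Since Proposition \ref{prop:autmorphismsOfLieAlg} realises $G$ as the closed subgroup of $\GL(\Sigma)$ cut out by the four polynomial conditions (i)--(iv), the Lie algebra $\mathrm{Lie}(G)$ is the set of $\delta\in\End(\Sigma)$ such that $\id_\Sigma+\epsilon\delta$ lies in $G(\k[\epsilon]/(\epsilon^2))$.

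Concretely, I would write $\varphi=\id+\epsilon\delta$ with $\delta\colon\Sigma\to\Sigma$ a $\k$-linear map and unpack each defining condition modulo $\epsilon^2$. Condition (i), degree preservation, forces $\delta$ to have degree $0$. Condition (ii), $\varphi D=D\varphi$, becomes $D\delta=\delta D$, i.e.\ $\delta$ is a chain map with respect to $D$. Condition (iii), $\varphi[a,b]=[\varphi(a),\varphi(b)]$, expands to
\begin{equation*}
[a,b]+\epsilon\delta[a,b]=[a,b]+\epsilon\bigl([\delta a,b]+[a,\delta b]\bigr),
\end{equation*}
so $\delta$ is a (degree-$0$, hence sign-free) derivation on $\Sigma$. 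Finally, condition (iv) gives $\delta(x_i)=0$ for $1\leq i\leq r$, i.e.\ $\delta$ vanishes on $\L(V)_{\leq m}$.

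The remaining step is the passage between derivations of $\Sigma$ and derivations of the whole $\L(V\oplus W)$, and this is where I expect the only real subtlety. Since $V\oplus W\subset\Sigma$ (the chosen $m$ is an upper bound on the degrees of the generators), any linear map $\delta$ on $\Sigma$ satisfying the Leibniz rule restricts to a map on $V\oplus W$ which extends uniquely to a degree-$0$ derivation $\tilde\delta$ of $\L(V\oplus W)$, and conversely every such $\tilde\delta$ restricts to $\Sigma$ because $\Sigma$ is stable under the bracket of its own elements. One then checks that $\tilde\delta$ vanishes on $\L(V)$ (it already vanishes on the generators $V\subseteq\L(V)_{\leq m}$ and is a derivation), and that $[\tilde\delta,d]=0$ on all of $\L(V\oplus W)$: the commutator is a derivation of degree $-1$, so it suffices to check it on the generators $V\oplus W\subset\Sigma$, where it coincides with $\delta D-D\delta=0$. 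Conversely, any $\delta\in Z_0\Der(\L(V\oplus W)\|\L(V))$ restricts to an element of $\End(\Sigma)$ satisfying (i)--(iv).

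Putting the two directions together gives a natural bijection between $\mathrm{Lie}(G)$ and $Z_0\Der(\L(V\oplus W)\|\L(V))$, and this bijection is $\k$-linear by construction, which completes the identification.
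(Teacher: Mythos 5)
Your proof is correct and follows essentially the same route as the paper: both identify the Lie algebra with the dual-number ($\Q[\epsilon]/(\epsilon^2)$) points of $\Aut_{\L(V)}(\L(V\oplus W))$ that reduce to the identity mod $\epsilon$, and read off that $\id+\epsilon\delta$ satisfies the defining conditions precisely when $\delta$ is a degree-zero derivation commuting with the differential and vanishing on $\L(V)$, i.e.\ $\delta\in Z_0(\Der(\L(V\oplus W)\|\L(V)))$. Your explicit handling of the passage between derivations of the truncation $\Sigma$ and derivations of the full free Lie algebra is a detail the paper leaves implicit, but it does not alter the argument.
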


\begin{proof}
Set $\Q[\epsilon] = \Q[Y]/Y^2$. The Lie algebra $L(\mathcal G)$ of an algebraic group $\mathcal G$ over $\Q$, is given by the tangent space of $\mathcal G$ at the identity element $\id\in\mathcal G(\Q)$
$$\mathscr T_{\id}(\mathcal G) = \{ \phi\in \mathcal G(\Q[\epsilon])\ |\ \phi \equiv \id (\mathrm{mod}\,\epsilon)\}.$$
Hence, the Lie algebra of $\mathcal G=\Aaut_{\L(V)}(\L(V\oplus W))$ is given by 
$$\{\phi \in \Aut_{\L(V)\ot\Q[\epsilon]}(\L(V\oplus W)\otimes \Q[\epsilon])\ |\   \phi \equiv \id (\mathrm{mod}\,\epsilon) \}$$

Given some $\phi\in \mathscr T_{\id}(\mathcal G)$, there is an associated function $\delta\colon \L(V\oplus W)\to \L(V\oplus W)$ given by $$\phi(x+\epsilon x') = \phi(x)+\epsilon \phi(x')= x + \epsilon \delta(x)+\epsilon x'$$
for $x+\epsilon x'\in \L(V\oplus W)\ot \Q[\epsilon]$.
We have that $\phi$ commutes with the  bracket and the differential if and only if $\delta$ is a derivation of degree zero that commutes with the differential. I.e.
$$\delta\in Z_0(\Der(\L(V\oplus W))).$$
Moreover, $\phi$ fixes $y+\epsilon y'\in \L(V)\oplus \Q[\epsilon]$ if and only if $\delta(y)=0$, i.e. $\delta\in Z_0(\Der(\L(V\oplus W)\|\L(V)))$. Thus, any derivation with that property yields an element of $\mathscr T_{\id}(\mathcal G)$. In particular, we have that $
\mathscr T_{\id}(\mathcal G) \cong Z_0(\Der(\L(V\oplus W)\|\L(V)))$.
\end{proof}

Now we are ready to prove the first part of Theorem \ref{thm:B}.
\begin{thm}[\text{cf. \cite[Theorem 3.4]{BlockLazarev}}]\label{thm:alggrp}
Let $A\subset X$ be a based cofibration of simply connected spaces of  the homotopy types of finite CW-complexes. Then $\pi_0(\aut_{A_\Q}(X_\Q))$ is isomorphic to the group of $\Q$-points of a linear algebraic group $\Aaut_{\L(V)}^h(\L(V\oplus W))$.
\end{thm}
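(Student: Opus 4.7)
The plan is to invoke the minimal model machinery from Section \ref{sec:minimalRelModels} to realize $\pi_0(\aut_{A_\Q}(X_\Q))$ as a quotient of a linear algebraic group by a closed normal subgroup, and then appeal to standard quotient theory for linear algebraic groups. First, I would pass to a Quillen Lie model of the cofibration $A \subset X$ and, via Theorem \ref{thm:minimalModel}, replace it by a minimal relative model $\iota\colon \L(V) \hookrightarrow \L(V \oplus W)$. Lemma \ref{lemma:finiteDim} ensures that $V$ and $W$ are finite dimensional, and Corollary \ref{cor:pi-noll} identifies $\pi_0(\aut_{A_\Q}(X_\Q))$ with the quotient of $\mathcal H := \Aut_{\L(V)}(\L(V\oplus W))$ by the $\iota$-equivalence relation. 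Proposition \ref{prop:autmorphismsOfLieAlg} already guarantees that $\mathcal H$ is a linear algebraic group over $\Q$, so the task reduces to exhibiting $\iota$-equivalence as the orbit relation of a closed normal algebraic subgroup $\mathcal N \trianglelefteq \mathcal H$.

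Next, I would describe $\mathcal N := \{\varphi \in \mathcal H : \varphi \simeq_{\rel \L(V)} \id\}$ explicitly in terms of relative derivations. Adapting the standard description of homotopies in the minimal setting (analogously to \cite[\S14]{felixrht} for Sullivan algebras, and using the conventions of \cite[\S5.II]{tanre83} for the Lie setting), I expect to show that $\varphi \simeq_{\rel \L(V)} \id$ if and only if $\varphi = \exp([d,\theta])$ for some $\theta \in \Der_1(\L(V\oplus W)\|\L(V))$. Because $\L(V\oplus W)$ is simply connected, any relative derivation of positive degree strictly increases the word length filtration on the free Lie algebra, and so $[d,\theta]$ is nilpotent, making the exponential a genuine polynomial expression. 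Normality of $\mathcal N$ in $\mathcal H$ follows from the naturality of the homotopy construction: given $\psi \in \mathcal H$ and a homotopy $h\colon \L(V\oplus W)\to \L(V\oplus W)\otimes \Lambda(t,dt)$ from $\id$ to $\varphi$ fixing $\L(V)$, the conjugate $(\psi\otimes \id)\circ h\circ \psi^{-1}$ is a homotopy from $\id$ to $\psi\varphi\psi^{-1}$ that still fixes $\L(V)$ pointwise.

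The main obstacle is showing that $\mathcal N$ is a Zariski-closed algebraic subgroup of $\mathcal H$, rather than merely the image of a morphism of varieties. To handle this I would invoke Lemma \ref{lemma:tgtSpaceOfAutomorphismsIsDerivations}: the Lie algebra of $\mathcal H$ is $Z_0(\Der(\L(V\oplus W)\|\L(V)))$, and the image of the map $[d,-]\colon \Der_1(\L(V\oplus W)\|\L(V)) \to Z_0(\Der(\L(V\oplus W)\|\L(V)))$ is the ideal $B_0(\Der(\L(V\oplus W)\|\L(V)))$, which consists of nilpotent derivations. In characteristic zero, the exponential map identifies such a finite-dimensional nilpotent Lie ideal with a closed connected unipotent algebraic subgroup of $\mathcal H$, whose underlying set is precisely $\mathcal N$. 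Once $\mathcal N$ is exhibited as a closed normal algebraic subgroup of $\mathcal H$, standard theory of linear algebraic groups yields that $\mathcal H/\mathcal N$ is again a linear algebraic group, and by Corollary \ref{cor:pi-noll} this quotient is $\pi_0(\aut_{A_\Q}(X_\Q))$, completing the proof.
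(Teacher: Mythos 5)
Your proposal follows essentially the same route as the paper's proof of Theorem \ref{thm:alggrp}: pass to a minimal relative Lie model via Theorem \ref{thm:minimalModel} and Lemma \ref{lemma:finiteDim}, identify $\pi_0(\aut_{A_\Q}(X_\Q))$ with $\Aut_{\L(V)}(\L(V\oplus W))/K$ using Corollary \ref{cor:pi-noll} and Proposition \ref{prop:autmorphismsOfLieAlg}, and exhibit $K$ as the closed normal unipotent subgroup $\exp\bigl(B_0(\Der(\L(V\oplus W)\|\L(V)))\bigr)$, with the identification $K=\exp(\h)$ deferred (as in the paper) to the techniques of \cite[Theorem 3.4]{BlockLazarev}. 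The one slip is your justification of nilpotency: it is not true that every relative derivation of positive degree raises word length (a degree-one derivation vanishing on $\L(V)$ may send a generator of $W$ to another generator), so, as in the paper, you should derive the word-length-raising property for the boundaries $[d,\theta]$ themselves from the relative minimality of the differential together with the vanishing on $\L(V)$, and only then conclude nilpotency from the fact that $\L(V\oplus W)$ is concentrated in positive degrees.
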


\begin{proof} Let $\L(V)\to \L(V\oplus W)$ be a minimal model for the pointed cofibratin $A\subset X$. By Lemma \ref{lemma:finiteDim}, $V$ and $W$ are finite dimensional and thus $\Aaut_{\L(V)}\L(V\oplus W)$ is an algebraic group. Let $\g = Z_0(\Der(\L(V\oplus W)\|\L(V)))$ denote the Lie algebra of $\Aut_{\L(V)}\L(V\oplus W)$ (Lemma \ref{lemma:tgtSpaceOfAutomorphismsIsDerivations}).
It follows  by Corollary \ref{cor:pi-noll} that $\pi_0(\aut_{A_\Q}(X_\Q))$ is isomorphic to $\Aut^h_{\L(V)}\L(V\oplus W):=\Aut_{\L(V)}\L(V\oplus W)/K$ where
 $$K=\{f\in \Aut_{\L(V)}\L(V\oplus W)\ |\ f\simeq_{\rel \L(V)} \id\}.$$

 We will show that $K$ is the group of $\Q$-points of a unipotent normal subgroup $\mathcal K\subset \Aaut_{\L(V)}(\L(V\oplus W))$, and hence, by \cite[Proof of Theorem 9.5]{wilkerson76} we have that $\Aut_{\L(V)}(\L(V\oplus W))/K$ are the $\Q$-points of the quotient  $\Aaut_{\L(V)}(\L(V\oplus W))/\mathcal K$ which we will denote by $\Aaut^h_{\L(V)}(\L(V\oplus W))$.

We proceed as in the  proof of \cite[Theorem 3.4]{BlockLazarev} by setting $\h = B_0(\Der(\L(V\oplus W)\|\L(V)))$. In particular, an element of $\h$ is of the form $[G,d] = G\circ d + d\circ G$, where $G$ is a derivation of degree 1 that vanishes on $\L(V)$. Thus, elements of $\h$ vanish on $\L(V)$, so by the relative minimality of $\L(V\oplus W)$, we have that  elements of $\h$ strictly raises the word length of elements of $\L(V\oplus W)$  (or sends them to 0). Since $\L(V\oplus W)$ is concentrated in positive degrees, it follows that $\h$ is nilpotent.  Thus, since $\h$ is also finite dimensional, it follows that $\exp(\h)$ corresponds to a unipotent linear algebraic subgroup of $\Aaut_{\L(V)}(\L(V\oplus W))$ (\cite[Theorem 14.37]{milne2017}). Moreover, we have that the Lie algebra of  $\mathcal K$, $\h = B_0(\Der(\L(V\oplus W)\|\L(V)))$ is an ideal in the Lie algebra $\g=Z_0(\Der(\L(V\oplus W)\|\L(V)))$ of $\Aaut_{\L(V)}\L(V\oplus W)$, which gives that $\mathcal K$ is normal in  $\Aaut_{\L(V)}(\L(V\oplus W))$ (\cite[§ 13.3]{humphreys}).
\end{proof}

For $A_\Q=*$, we get by Theorem \ref{thm:alggrp} that $\pi_0(\aut_*(X_\Q))$ is isomorphic to the group of $\Q$-points of  a linear algebraic group $\Aaut^h(\L W)$ where $\L W$ is a minimal dg Lie algebra model for $X$. Since $X_\Q$ is simply connected, we have that $\pi_0(\aut_*(X_\Q))\cong \pi_0(\aut(X_\Q))$. Hence, it follows $\pi_0(\aut(X_\Q))$ can also be viewed as the group of $\Q$-points of the same linear algebraic group $\Aaut^h(\L W)$.

Similarly, Sullivan \cite{sullivan77} proved that $\pi_0(\aut(X_\Q))$ is also isomorphic to the $\Q$-points of a linear algebraic group expressed in terms of the minimal Sullivan model for $X_\Q$. If the homology of $X$ is concentrated in degrees $\leq n$ and $X(n)$ is $n$'th Postnikov stage of $X$, then $\pi_0(\aut(X)) \cong \pi_0(\aut(X(n)))$. If $\Lambda V$ is the  minimal Sullivan model for $X$, then $\Lambda V^{\leq n}$ is the minimal model for $X(n)$, which is finitely generated, and thus yielding an algebraic group $\Aaut^h(\Lambda V^{\leq n})$. By similar arguments to the ones in the proof of Theorem \ref{thm:alggrp}, it follows that $\pi_0(\aut(X_\Q(n)) \cong \pi_0(\aut(X_\Q))$ is isomorphic the $\Q$-points of the algebraic group $\Aaut^h(\Lambda V^{\leq n})$. A priori, the algebraic groups $\Aaut^h(\Lambda V^{\leq n})$ and $\Aaut^h(\L W)$ are different, but by the  main result of \cite{salehAG}, they are isomorphic.

\begin{thm}[\cite{salehAG}]\label{thm:salehAGG}
Let $X$ be a simply connected $n$-dimensional finite CW-complex with a minimal Sullivan model $\Lambda V$ and a minimal dg Lie algebra model $\L W$.
There is an isomorphism of algebraic groups,
$$\Aaut^h(\L W)\cong \Aaut^h(\Lambda V^{\leq n}).$$
\end{thm}

\begin{thm}\label{thm:AlgMap} 
Let $A \subset X$ be a based cofibration of simply
connected spaces of the homotopy types of finite CW-complexes.
The map $\pi_0(\aut_{A_\Q}(X_\Q))\to \pi_0(\aut(X_\Q))$ induced by the inclusion $\aut_{A_\Q}(X_\Q)\hookrightarrow\aut(X_\Q)$ is modelled by the  $\Q$-points of a morphism of algebraic groups $$\Aaut^h_{\L(V)}(\L(V\oplus U))\to \Aaut^h(\L(U)).$$
\end{thm}
\begin{proof}
Let $\L(V)\to \L(V\oplus W)$ be a relative minimal dg Lie algebra model for $A\subset X$ and let $\theta \colon \L(U)\xrightarrow\sim \L(V\oplus W)$ be a minimal model for $\L(V\oplus W)$. Note that $\L(V\oplus W)$ is finitely generated

The dg Lie algebra $\L(U)$ is  fibrant-cofibrant so there exists a homotopy inverse  $\mu\colon \L(V\oplus W)\xrightarrow\sim  \L(U)$. Given any automorphism $f\in\Aut_{\L(V)}(\L(V\oplus U))$, we have that $\mu\circ f\circ\theta\colon \L(U)\to \L(U)$ is a quasi-isomorphism, and thus an isomorphism by minimality. 
This gives rise to a map of varieties (not necessary of algebraic groups) $\xi\colon\Aaut_{\L(V)}(\L(V\oplus W))\to \Aaut(\L(U))$ given by $f\mapsto \mu\circ f\circ\theta$ (recall by Proposition \ref{prop:autmorphismsOfLieAlg}) that $\Aaut(\L(U))$ and $\Aaut_{\L(V)}(\L(V\oplus W))$ can be viewed as algebraic subgroups of
$\GL(\L(U)_{\leq m})$ and $\GL(\L(V\oplus W)_{\leq m})$ respectively, where 
$m$ is maximal homological degree in $V\oplus W$, and thus $\theta$ and $\mu$ may be represented by finite matrices).

We want to show that $\xi(\exp(B_0(\Der(\L(V\oplus W)\|\L(V)))))\subseteq \exp(B_0(\Der(\L(U))))$. Since a morphism of varieties over a field $k$ is completely determined by the induced morphisms of $\overline k$-points, where $\overline k$ is an algebraic closure of $k$, it is enough to show that $\xi$ induces an inclusion on the $\C$-points. An element $\varphi\in\exp(B_0(\Der(\L(V\oplus W)\|\L(V))))(\C)$ is a relative automorphism of $\L(V\oplus W)\ot \C$ that is relative homotopic to the identity. Since $\theta$ and  $\mu$ are homotopy inverses to each other, it follows that 
$\mu_\C\circ\varphi\circ\theta_\C$ is homotopic to the identity on $\L(U)\ot\C$, which is thus an element of $\exp(B_0(\Der(\L(U))))(\C)$

Thus $\xi(\exp(B_0(\Der(\L(V\oplus W)\|\L(V)))))\subseteq \exp(B_0(\Der(\L(U))))$,  so $\xi$ induces a map of varieties 
$$
\begin{array}{ccc}
\bar\xi\colon \Aaut_{\L(V)}(\L(V\oplus W))/\exp(I) & \longrightarrow &  \Aaut(\L(U))/\exp(J),
\end{array}
$$
where $I= B_0(\Der(\L(V\oplus W)\|\L(V))))$ and $J=B_0(\Der(\L(U)))$.
This map respects the group structure on the $\C$-points is therefore a  map of algebraic groups (see e.g. \cite[Lemma 3.12]{salehAG}). The $\Q$-points of $\bar \xi$ induces a morphism of groups
$$\bar\xi(\Q)\colon\Aut^h_{\L(V)}(\L(V\oplus W))\to \Aut^h(\L(U))$$
that models the desired map.
\end{proof}

\section{Finite presentation}\label{sec:finitePres}
Throughout this and next section, $A\subset X$ is a cofibration of simply connected spaces of the homotopy type of finite CW-complexes.

We start by setting some notation. In order to simplify the notation, $ \pi_0(\aut(X))$, $\pi_0(\aut_A(X))$, $\pi_0(\aut(X_\Q))$ and $\pi_0(\aut_{A_\Q}(X_\Q))$ are denoted by $\E(X)$, $\E_A(X)$, $\E(X_\Q)$ and $\E_{A_\Q}(X_\Q)$, respectively. The rationalization functor induces a group homomorphism
$$r\colon \E(X)\to \E(X_\Q)
$$
and the inclusion $\aut_A(X)\hookrightarrow \aut(X)$ induces a group homomorphism
$$
j\colon \E_A(X)\to \E(X).
$$ 
The rational analogue gives a map
$$
j_\Q\colon \E_{A_\Q}(X_\Q)\to \E(X_\Q).
$$

Now we recall some of the theory of arithmetic groups needed in this section. 
\begin{dfn} Given a linear algebraic group $\mathcal G$ over $\Q$, there exists (by definition) a faithful representation $\rho\colon \mathcal G(\Q)\to \GL(V)$, where $V$ is a finite dimensional $\Q$-vector space. Given a lattice $L\subset V$, let 
$$\mathcal G_L = \{g\in \mathcal G(\Q)\ |\ \rho(g)(L)=L\}.$$
We say that a subgroup $\Gamma\subseteq \mathcal G$ is an \textit{arithmetic} subgroup of $\mathcal G$ if and only if $\Gamma$ and $\mathcal G_L$ are commensurable in $\mathcal G$. The definition of arithmetic groups is independent of choice of $\rho$ and $L$.\end{dfn}

\begin{lemma}[\text{\cite[Theorem 5.14]{milne2017}}]\label{lemma:imagealggroup}
Let $\varphi\colon \mathcal G\to \mathcal G'$ be a homomorphism of linear algebraic groups. Then $\varphi(\mathcal G)$ is a linear algebraic subgroup of $\mathcal G'$.
\end{lemma}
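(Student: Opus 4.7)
The plan is to combine Chevalley's constructibility theorem with a standard translation argument showing that a subgroup which is open in its closure must equal its closure. Since we are working over $\Q$ (characteristic zero), no separability issues arise.

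First, I would view $\varphi$ as a morphism of affine varieties and invoke Chevalley's theorem: the image of a morphism of varieties is a constructible subset, i.e., a finite union of locally closed pieces. Applied to $\varphi$, this gives that $H:=\varphi(G)$ is constructible in $G'$. A general fact about constructible sets is that a nonempty constructible subset of an irreducible variety contains a subset that is open and dense in its own closure; applying this componentwise, we obtain a nonempty $U\subseteq H$ which is open in $\overline{H}$.

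Next I would use the group structure. The closure $\overline{H}$ of a subgroup in an algebraic group is itself a (closed) algebraic subgroup of $G'$, because multiplication and inversion are continuous and their restrictions to $\overline{H}\times\overline{H}$ and $\overline{H}$ land in $\overline{H}$. For each $h\in H$, left translation by $h$ is an automorphism of the variety $\overline{H}$, so $hU$ is again open in $\overline{H}$ and contained in $H$. Therefore $H=\bigcup_{h\in H}hU$ is open in $\overline{H}$. The complement $\overline{H}\smallsetminus H$ is then a (possibly empty) union of cosets of $H$, each open in $\overline{H}$; hence $H$ is closed in $\overline{H}$, and being also dense it equals $\overline{H}$. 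In particular $H=\varphi(G)$ is Zariski closed in $G'$, so it is a linear algebraic subgroup.

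The only delicate points in this plan are the two classical lemmas I would need to cite: Chevalley's theorem on constructibility of images, and the fact that the closure of an abstract subgroup of an algebraic group is an algebraic subgroup. Both are standard and are precisely what Milne's Theorem 5.14 packages together, so the main obstacle is really just bookkeeping rather than substantive work — in practice I would simply refer to \cite{milne2017} as the paper does.
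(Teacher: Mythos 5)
Your argument is correct: the paper does not prove this lemma but simply cites Milne, and your Chevalley-constructibility-plus-translation argument is exactly the standard proof that the cited result packages. No gap to report.
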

\begin{lemma}\label{lemma:ArithGrpsUnderHomomorph}
 Let $\mathcal G$ be an algebraic group and $\Gamma\subseteq \mathcal G(\Q)$ be an arithmetic subgroup. Given any algebraic subgroup $\mathcal H\subset \mathcal G$, the group $\Gamma\cap \mathcal H(\Q)$ is an arithmetic subgroup of $\mathcal H$.
\end{lemma}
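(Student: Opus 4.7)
The plan is to verify the claim directly from the definition, using the faithful representation and lattice that witness $\Gamma$ as an arithmetic subgroup of $G$ and checking that the naturally induced data exhibit $\Gamma \cap H$ as arithmetic in $H$.

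First, I would fix a faithful representation $\rho\colon G\to \GL(V)$ and a lattice $L\subset V$ such that $\Gamma$ and $G_L=\{g\in G\mid \rho(g)(L)=L\}$ are commensurable in $G$; this data exists by the very definition of an arithmetic subgroup of $G$. Next, observe that the restriction $\rho|_H\colon H\to \GL(V)$ is again a faithful representation of $H$, and with respect to this representation and the same lattice $L$ the arithmetic stabilizer attached to $H$ is precisely
\[
H_L=\{h\in H\mid \rho(h)(L)=L\}=G_L\cap H.
\]
Hence, by the independence of the notion of arithmeticity from the choice of faithful representation and lattice, in order to conclude that $\Gamma\cap H$ is arithmetic in $H$ it suffices to prove that $\Gamma\cap H$ and $H_L=G_L\cap H$ are commensurable in $H$.

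The main step is therefore to establish that $(\Gamma\cap H)\cap H_L=\Gamma\cap G_L\cap H$ has finite index in both $\Gamma\cap H$ and in $G_L\cap H$. For this I would invoke the standard group-theoretic fact that if $A$ has finite index in $B$ and $C\leq B$, then $A\cap C$ has finite index in $C$, with index bounded by $[B:A]$. Applying this with $A=\Gamma\cap G_L$, $B=\Gamma$, and $C=\Gamma\cap H$ yields that $\Gamma\cap G_L\cap H$ has finite index in $\Gamma\cap H$; applying it with $A=\Gamma\cap G_L$, $B=G_L$, and $C=G_L\cap H$ yields the analogous finite-index statement for $H_L$. Combining these two conclusions shows that $\Gamma\cap H$ and $H_L$ are commensurable in $H$, completing the proof.

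I do not expect any serious obstacle: once one unwinds the definition and notes that the stabilizer construction $(\,\cdot\,)_L$ is compatible with restriction to an algebraic subgroup, the argument reduces to the elementary finite-index lemma above. The only point that requires a line of care is the appeal to the fact that the notion of arithmeticity of a subgroup of $H$ is independent of the choice of faithful representation (of $H$) and lattice, which is part of the definition recalled just before the statement.
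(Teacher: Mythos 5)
Your proof is correct and follows essentially the same route as the paper: both identify $H_L = G_L\cap H$ via the restricted representation and then reduce to the fact that commensurability in $G$ is preserved under intersection with the subgroup $H$, which the paper leaves as "straightforward" and you spell out via the standard finite-index lemma.
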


\begin{proof}  For  $\Gamma = \mathcal G_L$ the assertion is certainly true since $\mathcal G_L\cap H = H_L$. It is straightforward to show that the commensurability relation is preserved under intersection with subgroups, i.e. if $\Gamma_1,\Gamma_2\subseteq \mathcal G$ are commensurable in $\mathcal G$, then  $\Gamma_1\cap \mathcal H, \Gamma_2\cap \mathcal H\subset  \mathcal H$ are also commensurable in $\mathcal H$.
\end{proof}

We recall that arithmetic groups are finitely presented by \cite[§ 5]{borel62}.

\begin{prop}\label{prop:finitepresentedd}
$j(\E_A(X))$ is commensurable with an arithmetic group. In particular $j(\E_A(X))$ is finitely presented.
\end{prop}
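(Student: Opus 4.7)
The plan is to exhibit a zigzag of finite-kernel/finite-index maps between $j(\E_A(X))$ and an arithmetic subgroup of $H:=j_\Q(\E_{A_\Q}(X_\Q))$, combining the classical Sullivan--Wilkerson theorem with an obstruction-theoretic argument on mapping sets.

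First, Sullivan--Wilkerson supplies that the rationalization $r\colon\E(X)\to\E(X_\Q)$ has finite kernel and that its image $r(\E(X))$ is commensurable in $\E(X_\Q)$, in the subgroup sense, with an arithmetic subgroup $\Gamma_0\subseteq\E(X_\Q)$. By Theorem~\ref{thm:AlgMap} together with the preceding lemma on images of algebraic-group homomorphisms, $H$ is a linear algebraic subgroup of $\E(X_\Q)$; Lemma~\ref{lemma:ArithGrpsUnderHomomorph} then makes $\Gamma_0\cap H$ an arithmetic subgroup of $H$. Because commensurability of subgroups of $\E(X_\Q)$ descends to commensurability of their intersections with $H$, the group $r(\E(X))\cap H$ is itself commensurable in $H$ with $\Gamma_0\cap H$, so it suffices to compare $r(j(\E_A(X)))$ with $r(\E(X))\cap H$.

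The heart of the proof will be to show that $r(j(\E_A(X)))$ has finite index in $r(\E(X))\cap H$. For this I would identify $j(\E_A(X))$ as a stabilizer: the post-composition action of $\aut(X)$ on $\mathrm{Map}(A,X)$ descends to an action of $\E(X)$ on $\pi_0(\mathrm{Map}(A,X))$, and the homotopy extension property for the cofibration $A\hookrightarrow X$ shows that the stabilizer of the class of the inclusion is exactly $j(\E_A(X))$. The resulting orbit injection $\E(X)/j(\E_A(X))\hookrightarrow \pi_0(\mathrm{Map}(A,X))$ fits, with its rational counterpart, into a commutative square whose vertical arrows are $r$ (on cosets) and the natural map $\pi_0(\mathrm{Map}(A,X))\to\pi_0(\mathrm{Map}(A_\Q,X_\Q))$. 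I would then show that this latter map has finite fibers by a Postnikov-tower obstruction argument of exactly the flavor of Theorem~\ref{Thm:finiteKer}: any two maps $A\to X$ that become homotopic rationally differ by a finite sequence of torsion obstructions in the finitely generated groups $H^{k+1}(A;\pi_{k+1}(X))$, of which there are only finitely many since $A$ is finite-dimensional. Pulling back through the orbit injection, the fiber of $\E(X)/j(\E_A(X))\to\E(X_\Q)/j_\Q(\E_{A_\Q}(X_\Q))$ over the identity coset is $r^{-1}(H)/j(\E_A(X))$, which is therefore finite; since the restriction of $r$ to $r^{-1}(H)$ surjects onto $r(\E(X))\cap H$ with finite kernel, this forces $r(j(\E_A(X)))$ to have finite index in $r(\E(X))\cap H$.

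Assembling, the zigzag
$$
j(\E_A(X))\;\xrightarrow{\;r\;}\;r(j(\E_A(X)))\;\hookleftarrow\;\Delta\;\hookrightarrow\;\Gamma_0\cap H,
$$
with $\Delta$ a common finite-index subgroup of $r(j(\E_A(X)))$ and $\Gamma_0\cap H$, consists of finite-kernel/finite-index maps, so $j(\E_A(X))$ is commensurable in the zigzag sense with the arithmetic subgroup $\Gamma_0\cap H$ of $H$. Finite presentation of $j(\E_A(X))$ then follows from the classical Borel--Harish-Chandra finite presentation of arithmetic groups in characteristic zero, together with the preservation of finite presentability under zigzag commensurability. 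The main obstacle I foresee is the finite-fibers claim for the restriction map on mapping components; the rest is formal manipulation of algebraic and arithmetic groups, and the required obstruction theory is essentially a direct re-run of the argument already used to prove Theorem~\ref{Thm:finiteKer}.
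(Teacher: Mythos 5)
Your proposal is correct and follows essentially the same route as the paper: reduce to showing that $j(\E_A(X))$ has finite index in $r^{-1}\bigl(j_\Q(\E_{A_\Q}(X_\Q))\bigr)$ via the finiteness of the fiber of $[A,X]\to[A_\Q,X_\Q]$ over the class of the inclusion, and get the arithmetic subgroup of $j_\Q(\E_{A_\Q}(X_\Q))$ from Sullivan--Wilkerson together with the algebraicity of $j_\Q$ and Lemma \ref{lemma:ArithGrpsUnderHomomorph}. The only difference is that where you propose to re-prove the finite-to-one property of $[A,X]\to[A_\Q,X_\Q]$ by an obstruction argument in the style of Theorem \ref{Thm:finiteKer}, the paper simply cites Sullivan's Theorem 10.2~(i), so your extra step is sound but not needed.
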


\begin{proof}
By combining Lemma \ref{lemma:imagealggroup} and Theorem \ref{thm:AlgMap} we get that $j_\Q(\E_{A_\Q}(X_\Q))$ is the group of $\Q$-points of the algebraic subgroup $\mathcal H\subseteq \Aaut^h(\L(U))$, where $\L(U)$ is the minimal dg Lie algebra model for $X$. 
Set 
$$
G_A(X) = r^{-1}(j_\Q(\E_{A_\Q}(X_\Q)))\subseteq \E(X).
$$
We have by definition that $r(G_A(X)) =  r(\E(X))\cap j_\Q(\E_{A_\Q}(X_\Q)) = r(\E(X))\cap \mathcal H(\Q)$. The classical Sullivan-Wilkerson theorem combined with Theorem \ref{thm:salehAGG} gives that  $r(\E(X))$ is an arithmetic subgroup of $\Aaut^h(\L(U))$  and hence $r(G_A(X))$ is an arithmetic group of $\mathcal H$ by Lemma \ref{lemma:ArithGrpsUnderHomomorph}. Moreover, since $r$ is finite to one  (\cite[Theorem 10.2 (i)]{sullivan77}), it follows that $G_A(X)$ is commensurable up to finite kernel with an arithmetic group.

Note that $f\in G_A(X)$ if and only if $f_\Q|_{A_\Q}$ is homotopic to the inclusion of $A_\Q$ into $X_\Q$. Note that $j(\E_A(X)))\subseteq G_A(X)$, and hence it is enough to show that $j(\E_A(X)))$ has finite index in $G_A(X)$  in order to deduce that $j(\E_A(X)))$ is commensurable up to finite kernel with an arithmetic group. We have by \cite[Theorem 10.2 (i)]{sullivan77} that $r\colon [A,X]\to [A_\Q,X_\Q]$ is finite-to-one. If the cardinality of the inverse image of the inclusion $A_\Q\hookrightarrow X_\Q$ is $k\in \Z_{\geq 1}$, then the cardinality of $G_A(X)/j(\E_A(X))$ is at most $k$, which proves the commensurability condition. Since arithmetic groups are finitely presented and finite presentability is stable under extensions (\cite[§ V.A.15]{harpe}), it follows  that $j(\E_A(X))$ is finitely presented.
\end{proof}

\begin{thm}\label{thm:finitePresentation}
$\E_A(X)$ is finitely presented.
\end{thm}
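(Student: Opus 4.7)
The plan is to combine Proposition~\ref{prop:finitepresentedd} with the fact that finite presentation is preserved under extensions. Consider the short exact sequence
\[
1 \to \ker(j) \to \E_A(X) \xrightarrow{j} j(\E_A(X)) \to 1.
\]
Proposition~\ref{prop:finitepresentedd} already shows that $j(\E_A(X))$ is finitely presented, so it suffices to prove that $\ker(j)$ is finitely presented. I will achieve this by proving the stronger claim that $\ker(j)$ is a finitely generated abelian group.

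To analyze $\ker(j)$, I would use the restriction map $\mathrm{res}\colon \aut(X)\to\map(A,X)$, $f\mapsto f|_A$, which is a Hurewicz fibration because $A\subset X$ is a cofibration. Restricting to the preimage of the path component of $\iota\colon A\hookrightarrow X$ in $\map(A,X)$ produces a fibration whose fiber over $\iota$ is $\aut_A(X)$. Its long exact sequence of homotopy groups gives
\[
\pi_1(\map(A,X),\iota) \xrightarrow{\partial} \pi_0(\aut_A(X)) \xrightarrow{j} \pi_0(\aut(X)).
\]
A standard application of the homotopy extension property identifies $\pi_0$ of the restricted total space with $j(\E_A(X))$, and consequently $\ker(j)$ equals the image of the connecting homomorphism $\partial$ and is in particular a quotient of $\pi_1(\map(A,X),\iota)$.

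The final step is to show that $\pi_1(\map(A,X),\iota)$ is a finitely generated abelian group. Abelianness comes from the evaluation fibration $\map_*(A,X)\to\map(A,X)\to X$ at a basepoint of $A$; since $X$ is simply connected, this fibration yields an isomorphism $\pi_1(\map(A,X),\iota)\cong \pi_1(\map_*(A,X),\iota)$, and the $\pi_1$ of a pointed mapping space is always abelian. Finite generation follows from Federer's spectral sequence $E_2^{p,q} = H^p(A;\pi_q(X))\Rightarrow \pi_{q-p}(\map(A,X),\iota)$, using that $A$ has the homotopy type of a finite CW complex and that each $\pi_q(X)$ is finitely generated. Thus $\ker(j)$ is finitely generated abelian and hence finitely presented, so the extension argument completes the proof. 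The main technical point is the invocation of Federer's spectral sequence to control $\pi_1$ of the mapping space; everything else is formal.
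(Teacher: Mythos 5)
Your overall architecture is exactly the paper's: the same fibration $\aut_A(X)\to\aut(X)\to\map(A,X)$, the identification of $\ker(j)$ with the image of the connecting map $\partial$ from $\pi_1(\map(A,X),\incl)$, Proposition \ref{prop:finitepresentedd} for the quotient, and stability of finite presentation under extensions. The gap is in the one place where you diverge: the claim that ``the $\pi_1$ of a pointed mapping space is always abelian'' is false. At the component of a constant map one has $\pi_1(\map_*(A,X),\mathrm{const})\cong[\Sigma A,X]_*$, and such groups are in general non-abelian even for simply connected finite complexes: for $A=S^2\times S^2$ and $X=S^3\vee S^3$, the commutator of the two classes coming from the $S^3$-summands of $\Sigma A\simeq S^3\vee S^3\vee S^5$ is given by a Whitehead product that is nonzero by the Hilton--Milnor theorem. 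One can arrange this inside the relevant component of a cofibration (e.g.\ $A\hookrightarrow A\times(S^3\vee S^3)$, whose inclusion component of $\map(A,-)$ contains the constant component of $\map(A,S^3\vee S^3)$ as a factor), so the abelianness you rely on genuinely fails in the setting of the theorem. A secondary inaccuracy: the evaluation fibration $\map_*(A,X)\to\map(A,X)\to X$ with $X$ simply connected only gives a surjection $\pi_1(\map_*(A,X),\incl)\twoheadrightarrow\pi_1(\map(A,X),\incl)$, not an isomorphism, since $\pi_2(X)$ may map nontrivially to $\pi_1(\map_*(A,X),\incl)$; this would not matter if the based group were abelian, but it is not.

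The repair is the nilpotency statement the paper invokes: by Hilton--Mislin--Roitberg, every component of $\map(A,X)$ is a nilpotent space when $A$ is a finite complex and $X$ is nilpotent, so $\pi_1(\map(A,X),\incl)$ is a finitely generated \emph{nilpotent} group (finite generation by induction over the cells of $A$, or by your spectral-sequence argument for the finitely many contributing terms). Hence its quotient $\ker(j)=\im(\partial)$ is finitely generated nilpotent, therefore finitely presented, and your extension argument then goes through verbatim. Note also that finitely generated nilpotent is really what is needed; insisting on abelian buys nothing here.
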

\begin{proof}
The fibration $\aut_A(X)\to \aut(X)\to \map(A,X)$, yields  an exact sequence 
$$\pi_1(\map(A,X),\incl)\xrightarrow\partial\E_A(X)\xrightarrow j \E(X).$$
We have that any component of $\map(A,X)$ is a nilpotent space (\cite[Chapter II, Theorem 2.5. (ii)]{hmr75}), implying that  $\pi_1(\map(A,X),\incl)$ is a nilpotent group, and thus that $\pi_1(\map(A,X),\incl)/\ker(\partial)$ is a nilpotent group.

By Federer \cite{federer}, given two CW-complexes $A$ and $X$ and a continuous map $f\colon A\to X$ where $A$ is finite and $X$ is simple, there is a spectral sequence
$E^2_{p,q} =H^p(A,\pi_q(X))\Rightarrow \pi_{q-p}(\map(A,X),f)$.
Since all the groups $E^2_{p,q}$ are abelian and vanishes for $p$ greater than the dimension of $A$, it follows that $\pi_1(\map(A,X),f)$ is a solvable group. In particular, there is a  subnormal series
$$
1=G_1\triangleleft G_2\triangleleft \cdots\triangleleft G_k =\pi_1(\map(A,X),f)
$$
where $G_i/G_{i-1} = E^\infty_{i-1,i}$. Since $E^2_{p,q}$ is abelian and finitely generated and $E^\infty_{p,q}$ is a subquotient of it, it follows that $E^\infty_{p,q}$ is finitely generated. Now one proves that $G_i$ is finitely generated for all $i$ by induction on $i$. For $i=1$ this is trivial. If $G_{i-1}$ is finitely generated and $G_i/G_{i-1}$ is finitely generated it follows that $G_i$ is finitely generated (the union of the set of generators of $G_{i-1}$ and a set of elements of $G_i$ that projects to the generators of $G_i/G_{i-1}$ generates all of $G_i$).

In particular $\pi_1(\map(A,X),\incl)$ is finitely generated.

Let $G$ denote the quotient $\pi_1(\map(A,X),\incl)/\ker(\partial)$, which is again a finitely generated nilpotent group.  Finitely generated nilpotent groups are finitely presented \cite[§ V.A.19]{harpe} and thus $G$ is finitely presented.

We get a short exact sequence 
$$1\to G\to \E_A(X)\to j(\E_A(X))\to 1$$ where $G$ and $j(\E_A(X))$ are finitely presented (Proposition \ref{prop:finitepresentedd}). Now it follows that $\E_A(X)$ is finite presented since finite presentability is stable by extensions (\cite[§ V.A.15]{harpe}).
\end{proof}

\section{Finite Postnikov stages}\label{sec:finite-postnikov}
\begin{dfn}
Given fixed inclusions $A\subset X$ and $A\subset Y$, let $\map_A(X,Y)$ denote the space of continuous maps from $X$ to $Y$ that preserve $A$ pointwise, and let $[X,Y]_A = \pi_0(\map_A(X,Y))$. 
\end{dfn}
\begin{rmk}
The set $[X,X]_A$ is a monoid under composition.
\end{rmk}

Given an $n$-dimensional CW-complex $X$, one can give the $n$-Postnikov stage $X(n)$ a CW-complex structure, where $X$ is a subcomplex and where $X(n)/X$ has cells of dimension $\geq n+2$. In particular, there are no  $(n+1)$-dimensional cells in $X(n)$ and $X$ is thus also the $(n+1)$-skeleton of $X(n)$.

\begin{lemma}\label{lemma:last}
Let $X$ be an $n$-dimensional simple CW-complex, and let $A\subset X$ be a subcomplex.  There is an isomorphism of monoids $[X(n),X(n)]_A \cong [X,X]_A$.
\end{lemma}

\begin{proof}
We start by defining a map $\alpha\colon [X(n),X(n)]_A\to [X,X]_A$ in the following way. Given a homotopy class $\bar f \in [X(n),X(n)]_A$, take a cellular representative $f\colon X(n)\to X(n)$ (this is possible due to the cellular approximation theorem). Since $X$ is the $n$-skeleton of $X(n)$ it follows that $f(X)\subset X$, and thus the restriction of $f$ to $X$ defines an endomorphism of $X$. Let $\alpha(\bar f)$ be the homotopy class of $f|_X\colon X\to X$.

We need to show that $\alpha$ is well-defined. Given two different cellular representatives $f$ and $g$ for $\bar f$, there is an $A$-preserving cellular homotopy 
$$
H\colon X(n)\times I\to X(n)
$$
from $f$ to $g$. Since $H$ is cellular, $X$ is $n$-dimensional and $X(n)$ has no $(n+1)$-dimensional cells  it follows that $H(X\times I)\subset X$. Hence the restriction of $H$ to $X\times I$ defines an $A$-preserving homotopy from $f|_X$ to $g|_X$. This proves that $\alpha$ is well-defined. Clearly $\alpha$ preserves compositions, and therefore a morphism of monoids.

Now we prove that $\alpha$ is injective. Assume that $\alpha(\bar f)= \alpha(\bar g)$. That means that there is an $A$-preserving homotopy $h\colon X\times I$ from $f|_X$ to $g|_X$. In particular there is a map  
$$
j\colon X(n)\times \{0\} \cup X\times I \cup X(n)\times \{1\}\to X(n) 
$$
where $j|_{X(n)\times \{0\}} =f$, $j|_{X\times I}= h$ and $j|_{X(n)\times \{1\}}= g$. There is an inclusion from the domain of $j$ to $X(n)\times I$. A lift $\tilde h$  of $j$ to $X(n)\times I$ is an $A$-preserving homotopy from $f$ to $g$. We have that the obstructions to such lifts live in
$$
H^{k+1}(X(n)\times I, X(n)\times \{0\} \cup X\times I \cup X(n)\times \{1\};\pi_k(X(n)))
$$ 
$$
\cong H^{k+1}(\Sigma(X(n)/X);\pi_k(X(n))) \cong H^k(X(n)/X;\pi_k(X(n)))
$$
(see \cite[Proposition 4.72]{hatcher}).

Since $X(n)/X$ only has cells of dimension $\geq n+2$, the homology groups vanish for $k\leq n+1$. Since $\pi_t(X(n))= 0$ for $t\geq n+1$, it follows that the homology groups  vanish for all $k\geq n+1$. Hence all obstructions vanish. Thus an $A$-relative homotopy from $f$ to $g$ exists and injectivity is proved.

For surjectivity, we will show that any endomorphism of $X$ extends to an endomorphism of $X(n)$. This is equivalent to the lifting problem
$$
\xymatrix{
X\ar[r]^f\ar@{>->}[d] & X\ar@{>->}[r]&X(n)\\
X(n)\ar@{-->}[rru]
}
$$
for every endomorphism $f\colon X\to X$. 
Obstructions to such lifts live in 
$$
H^{k+1}(X(n),X;\pi_k(X(n))) \cong H^{k+1}(X(n)/X;\pi_k(X(n))).
$$
As before, all these cohomology groups vanish, and thus lifts do always exist. This completes the proof.
\end{proof}

\begin{rmk}
    The proof mentioned above remains valid when replacing $n$ with any $m\geq n$.
\end{rmk}

\begin{cor} Under the assumptions of Lemma \ref{lemma:last}, there is an isomorphism of groups $\pi_0(\aut_A(X(m)))\cong \pi_0(\aut_A(X))$ for every $m\geq n$.
\end{cor}

\bibliographystyle{amsalpha}
\bibliography{references}
\noindent
\Addresses
\end{document}